\newcommand{\Co}{\mathcal{C}}
\newcommand{\N}{\mathbb{N}}
\newcommand{\R}{\mathbb{R}}
\newcommand{\tlangle}{< \! \! \! \mid}
\newcommand{\trangle}{\mid \! \! \! >}
\newtheorem{theorem}{\sc Theorem}[section]
\newtheorem{lemma}{\sc Lemma}[section]
\newtheorem{proposition}{\sc Proposition}[section]
\newtheorem{remark}{\sc Remark}
\newtheorem{definition}{\sc Definition}[section]
\def\R{{\mathbb R}}
\def\N{{\mathbb N}}
\begin{document}

\title{A stochastic SIR model with contact-tracing:\\ large population limits and statistical inference}

\author{St\'ephan Cl\'{e}men\c{c}on, Viet Chi Tran, and Hector de Arazoza}

\maketitle

\begin{abstract}
This paper is devoted to present and study a specific stochastic epidemic model accounting for the effect of \textit{contact-tracing} on the spread of an infectious disease. Precisely, one considers here the situation in which individuals identified as infected by the public health detection system may contribute to detecting other infectious individuals by  providing information related to persons with whom they have had possibly infectious contacts. The control strategy, that consists in examining each individual  one has been able to identify on the basis of the information collected within a certain time period, is expected to reinforce efficiently the standard random-screening based detection and slack considerably the epidemic. In the novel modelling of the spread of a communicable infectious disease considered here, the population of interest evolves through demographic, infection and detection processes, in a way that its temporal evolution is described by a stochastic Markov process, of which the component accounting for the contact-tracing feature is assumed to be valued in a space of \textit{point measures}. For adequate scalings of the demographic, infection and detection rates, it is shown to converge to the weak deterministic solution of a PDE system, as a parameter $n$, interpreted as the population size roughly speaking, becomes large. From the perspective of the analysis of infectious disease data, this approximation result may serve as a key tool for exploring the asymptotic properties of standard inference methods such as maximum likelihood estimation. We state preliminary statistical results in this context. Eventually, relation of the model to the available data of the HIV epidemic in Cuba, in which country a contact-tracing detection system has been set up since 1986, is investigated and numerical applications are carried out.
\end{abstract}

\textbf{Keywords} mathematical epidemiology, stochastic SIR model, contact-tracing, measure-valued Markov process, HIV, large population approximation, central limit theorem, maximum likelihood estimation.\\

\textbf{AMS Subject Classification} 92D30, 62P10, 60F05

\section{Introduction}\label{intro}

Since the seminal contribution of \cite{kermackmckendrick}, the mathematical modelling of epidemiological phenomena has received increasing attention in the applied mathematics literature. References devoted to epidemic modelling or statistical analysis of infectious disease data are much too numerous for being listed in this paper (refer to \cite{andersonbritton, mode} for recent accounts of stochastic epidemic modelling, while deterministic models for the spread of infectious diseases are comprehensively presented and discussed in \cite{castillochavez2002, AndersonMay}). Here, an attempt is made to extend the 'general stochastic epidemic model', usually referred to as the standard SIR model, in order to take appropriate account of the effect of a \textit{contact-tracing} control measure on the spread of the epidemic at the population level and to acquire a better understanding of the efficiency of this intervention strategy.
\par
In the area of public health practice, by \textit{contact-tracing} one means the active detection mechanism that consists in asking individuals identified as infected to name persons  with whom they have had possibly infectious contacts and then, on the basis of the information provided, in striving to find those persons in order to propose them a medical examination and a cure in the event of infection. Though expensive and controversial, 'contact-tracing' programs are now receiving much attention both in the scientific literature (see \cite{Hyman, Hector, Rapatski} or \cite{Muller} for instance) and in the public health community, within which they are generally considered as efficient guidance methods for bringing the spread of sexually transmissible diseases (STD's) under control. For instance, a contact-tracing detection system has been set up since 1986 for controlling the HIV epidemic in Cuba (refer to \cite{Auvert} for an overview of the evolution of HIV/AIDS in Cuba), which shall serve as a running illustration for the concepts and methods studied in the present paper. From the perspective of public health guidance practice, mathematical modelling of epidemics in presence of a contact-tracing strategy reinforcing a screening-based detection system is a crucial stake, insofar as it may help evaluating the impact of this costly control measure. In this framework, epidemic models must naturally account for the fact that, once detected, an infected person keeps on playing a role in the evolution of the epidemic for a certain time by helping towards identification of infectious individuals.
\par
The primary goal of this paper is to generalize the standard SIR model by incorporating a \textit{structure by age} in the subpopulation of detected individuals, age being here the time since which a person has been identified as infected. At any time, the 'R' class is described by a \textit{point measure}, on which the contact-tracing detection rate is supposed to depend. In this manner, the way an 'R' individual contributes to contact-tracing detection may be made strongly dependent on the time since her/his detection through a given \textit{weight function} $\psi$, allowing for great flexibility in the modelling. Assuming in particular a large population in which the infectious disease is spread, properties of the mathematical model are thoroughly investigated and preliminary statistical questions are tackled.  Beyond stochastic modelling of the contact-tracing feature, the present work establishes large population limit results (law of large numbers and central limit theorem) for the measure-valued Markov process describing the epidemic (we follow in this respect the approach developed in \cite{fourniermeleard, meleardfluctuation, chithese, trangdesdev}), as well as in its application to statistical analysis of the epidemic.
\par The paper is organized as follows. In Section \ref{sectionmodel}, a Markov process with an age-structured component is introduced for modelling the temporal evolution of an epidemic in presence of contact-tracing. A short qualitative description is provided, aiming at giving an insight into how the dynamic is driven by a few key components. The process of interest is the solution of a stochastic differential equation (SDE) for which existence and uniqueness results are stated, together with a short probabilistic study. The main results of the paper are displayed in Section 3. Considering a sequence of epidemic models with contact-tracing indexed by a parameter $n\in \mathbb{N}^*$ representing the population size roughly speaking, limit results are established when $n\rightarrow \infty$. Applications of the latter to the study of maximum likelihood estimators (based on complete data) in the context of statistical parametric estimation of the epidemic model with contact-tracing are then considered in Section 4. Eventually, in Section 5, these inference techniques are applied for analyzing data related to the HIV epidemic in Cuba and drawing preliminary conclusions about the effectiveness of contact-tracing in this particular case: it can be seen that the chosen model especially reflects the growing efficiency of the contact-tracing detection method, the latter becoming almost as competitive as the random screening based method ten years after the beginning of the epidemic. Technical proofs are postponed to the Appendix.

\section{The stochastic SIR model with contact-tracing}\label{sectionmodel}

Epidemic problems really present a great challenge to probabilists and statisticians. Models for the spread of infections are based on hypotheses about such
mechanisms as infection and detection. The huge diversity of possible hypotheses could give rise to an enormous variety of probabilistic models with
their specific features. Although comprehensive mathematical models should incorporate numerous features to account for real-life situations (such as population stratified according to socio-demographic characteristics, time-varying infectivity,
effects of latent period, change in behavior, \textit{etc.}), we shall deal with
a stochastic epidemic model with a reasonably simple structure (a modified version of
the standard 'Markovian SIR model with demography', actually), while
covering some important aspects and keeping thus its pertinence from the perspective of practical applications. Indeed, incorporating too many features would naturally make the model too
difficult to study analytically. As previously mentioned, we are mainly concerned here with a probabilistic modelling of the spread of an infectious disease in presence of a contact-tracing control strategy in the long-range (\textit{i.e.} when one cannot assume that the epidemic ceases before some demographic changes occur,
leading up to take into consideration immigration/birth and emigration/death processes). A parcimonious Markovian structure for describing these features is stipulated, the main novelty arising from the measure-valued component incorporated into the model in order to account for the effects of contact-tracing. Beyond its simplicity, our modelling hopefully suffices to shed some light on the problem of investigating the efficacy of such a control measure. In the case of the HIV epidemic in Cuba for instance (see Section 5), the model obtained accounts for the fact that contact-tracing has become as efficient as random screening after 6 years. To our knowledge, earlier works have not allowed to construct a model reflecting this phenomenon (see \cite{Hector} and the references therein).

\subsection{The population dynamics} \label{dynamics}

We start with a qualitative description of the
population dynamics and a list of all possible events through which the
population of interest may evolve (see Fig. 1). The population is structured into three classes corresponding to the different possible states with respect to the infectious disease. We adopt the standard \textit{SIR} terminology for denoting the current status of an individual with the only differences that '\textit{R}' stands here for the population of 'removed individuals willing to take part in the contact-tracing program' and that it is structured according to the \textit{age of detection}, namely the time since a detected individual has been identified by the public health detection system as infected. Such a distinction allows for considering heterogeneity in the way each 'R' individual contributes to the contact-tracing control. Hence, at any time $t\geq 0$ the class of removed individuals is described by $R_t(da)$ in $\mathcal{M}_P(\R_+)$, the set of point measures on $\mathbb{R}_+$: for all $0<a_1<a_2<\infty$, the quantity $R_t([a_1,a_2])$ represents the number of removed
  individuals who have been detected between times $t-a_2$ and $t-a_1$. Here and throughout, we use the notation $\langle R, \psi \rangle=\int \psi(a)R(da)$, $R$ being any positive measure on $\mathbb{R}_+$ and $\psi$ any $R$-integrable function. In a more standard fashion, we shall denote by $S_t$ and $I_t$ the sizes of the classes of susceptible and infectious individuals.

\unitlength=1cm
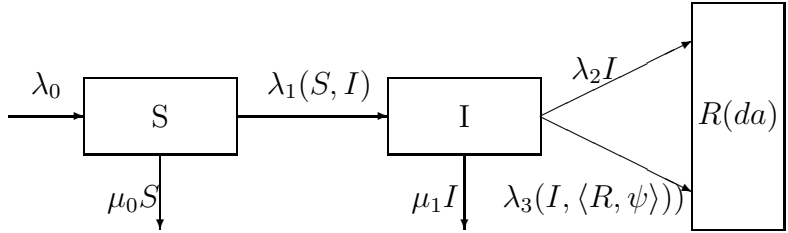
\begin{figure}[ht]
  \begin{center}
    \begin{picture}(13,3)
    \put(1,1.5){\vector(1,0){1}}
    \put(1.3,1.8){$\lambda_0$}
    \put(2,1){\framebox(2,1){S}}
    \put(3,1){\vector(0,-1){1}}
    \put(2.3,0.3){$\mu_0 S$}
    \put(4,1.5){\vector(1,0){2}}
    \put(4.4,1.8){$\lambda_1(S,I)$}
    \put(6,1){\framebox(2,1){I}}
    \put(7,1){\vector(0,-1){1}}
    \put(6.3,0.3){$\mu_1 I$}
    \put(8,1.5){\vector(2,1){2}}
    \put(8,1.5){\vector(2,-1){2}}
    \put(8.4,2){$\lambda_2 I$}
    \put(7.5,0.3){$\lambda_3(I,\langle R,\psi\rangle))$}
    \put(10,0){\framebox(1.2,3){$R(da)$}}
    \end{picture}
  \caption{\textit{Flow-diagram of the SIR model with contact-tracing detection.}}
  \end{center}
\end{figure}

\noindent Individuals
immigrate one at a time according to a Poisson process
of intensity $\lambda_{0}$. Once in the population, an individual becomes 'susceptible' and may either
leave the population without being contaminated (emigration or death) or independently be infected. Emigrations occur in the population at time $t\geq 0$ with the hazard rate $\mu_0 S_t$ and infections with the rate $\lambda_{1}(S_t,I_t)$. Once
infected, an individual can be discovered by the detection system either by random screening ('spontaneous detection') or by contact-tracing, or else emigrates/dies. The hazard rates associated with these events are respectively $\lambda_{2} I_t$, $\lambda_{3}(I_t,\langle R_t, \psi\rangle)$, where $\psi:\mathbb{R}_+\rightarrow \mathbb{R}_+$ is a bounded and measurable \textit{weight function} that determines the contribution of a removed individual to the contact-tracing control according to the time $a$ she/he has been detected (see the examples discussed below) and $\mu_1 I_t$.
If detected, an individual takes part in the contact-tracing system by providing useful information related to her/his (possibly) infectious contacts. We do not consider the emigration/death of detected individuals since it is the availability of the information that they have given rather than their presence in the system that plays a role in the contact-tracing process. In order to avoid possible misunderstanding due to the notation, we underline that $\lambda_1(.,.)$ and $\lambda_3(.,.)$ here denote jump rate functions related to the SIR process and not the individual rates (for instance, given the class sizes $S$ and $I$, the infection rate of a given susceptible is $\lambda_1(S,I)/S$).
\par The events through which the sizes $S_t$, $I_t$ and the point measure $R_t$ evolve are numbered as follows:
\begin{itemize}
\item Event $E=0$: recruitment of a susceptible,
\item Event $E=1$: death/emigration of a susceptible,
\item Event $E=2$: infection,
\item Event $E=3$: 'spontaneous' detection of an infective,
\item Event $E=4$: detection of an infective by contact-tracing,
\item Event $E=5$: death/emigration of an infective.
\end{itemize}

\noindent Before providing a description of the population process introduced above via a system of SDEs, a few remarks and examples are in order.\\

\noindent {\sc Examples.  (On modelling the contact-tracing feature)} \textit{As previously explained, the removed individuals contribute to contact-tracing in function of the time since their detections and through the weight function $\psi$.\\
1. In the case where the information provided by a detected person enables to examine individuals at a constant rate over a period of time of fixed length $\tau>0$ immediately after its detection (after this time period the information is considered as consumed), the weight function could be chosen as}
\[
\psi(a)=\mathbf{1}_{\{a\in[0,\tau]\}},\;\mbox{for all }a\geq 0.
\]
\textit{Here we have denoted by $\mathbf{1}_{\mathcal{E}}$ the indicator function of $\mathcal{E}$. The second argument of the contact-tracing detection rate, $\langle R_t, \psi\rangle$, is then the number of individuals detected between times $t-\tau$ and $t$}.\\
\textit{2. The following choice:}
\begin{equation}
\psi(a)=e^{-c\cdot a},\;\mbox{for all }a\geq 0,\label{exemplepsi}
\end{equation}
\textit{with $c>0$, is of particular interest when assuming that efficiency of the information provided by a detected individual (geometrically) decreases as the time $a$ since its detection increases.}\\
\textit{3. To take into account the difficulties one may encounter at the early stages of the search for contacts, we can consider functions $\psi$ that are increasing from zero before decaying. From this viewpoint, a suitable beta or gamma density function would be possibly a reasonable choice of parametric weight function $\psi$. }
\begin{remark} {\sc (Explicit forms for jump rate functions)} \label{RKform}
Until now, no explicit form for the infection rate and the detection by contact-tracing rate has been specified for generality's sake (it shall be nevertheless assumed that $\lambda_1$ and $\lambda_3$ both fulfill the collection of assumptions \textbf{H1} listed below). In practice typical choices for the infection rate function are $\lambda_1SI$, $\lambda_1SI/(I+S)$  or $\lambda_1I$ with $\lambda_1>0$ (in order to lighten notation, abusively, we shall still denote by $\lambda_1$ and $\lambda_3$ the parameters characterizing the parametric forms of the rate functions $\lambda_1(.,.)$ and $\lambda_3(.,.,.)$). In the first example, a susceptible becomes infected with an individual rate $\lambda_1 I$ proportional to the number of infected individuals. This rate is generally referred to as the \textit{mass action principle} based model. In contradistinction, the two last examples correspond to a situation where the rate at which a given infective makes infectious contact does not increase with the size $S$ of the population of susceptibles. Equivalently, the individual rate at which a susceptible becomes infected, $\lambda_1 I/S$ or $\lambda_1 I/(I+S)$, decreases when $S$ increases. Such rates are usually termed \textit{frequency dependent}. In a similar fashion, the rate of detection by contact-tracing may be chosen as $\lambda_3 I \langle R, \psi \rangle$, $\lambda_3 I \langle R, \psi \rangle /(I+\langle R, \psi \rangle)$ or $\lambda_3 \langle R, \psi \rangle$, with $\lambda_3>0$.
\end{remark}

\begin{remark} {\sc (A more general framework)} One may consider generalizations of the setup described above, stipulating for instance that the 'S' and 'I' classes are stratified according to socio-demographic features (or sexual behavior characteristics in the context of STD's) in order to account for heterogeneities caused by the social structure of the population, even if it entails introducing more duration variables in the model. One may also introduce the 'age of infection'. This would enable us to model directly time-varying infectivity, offering this way an alternative to so-called 'stage modelling' approaches (see \cite{Isham} for instance). The theoretical results of this paper may be extended in a straightforward manner to such more general frameworks. In order to lighten the notation and make proofs simpler, we restrict the study to the model described above.
\end{remark}

\noindent \textbf{Assumptions H1:} In the remainder of the paper, the rate functions $\lambda_1$ and $\lambda_3$ are assumed to belong to $\Co^1(\R_+^2)$, the set of real functions of class $\Co^1$ on $\R_+^2$. We denote by by $\partial_S \lambda_1$, $\partial_I \lambda_1$, $\partial_I \lambda_3$ and $\partial_R \lambda_3$ their partial derivatives. We also suppose that all these functions are locally Lipschitz continuous and dominated by the mapping $(x,x')\in \mathbb{R}_+^2\mapsto xx'$: for $k\in\{1,3\}$, we assume that $\forall N>0,\, \exists L_k(N)>0$ such that
\begin{equation*}
\forall (x,x'),(y,y')\in [0,N]^2,\;|\lambda_k(x,x')-\lambda_k(y,y')|
\leq L_k(N)(|x-y|+|x'-y'|), \end{equation*}
 and that $\exists \bar{\lambda}_k>0,\,\forall (x,x')\in \R_+^2,\, \lambda_k(x,x')\leq \bar{\lambda}_k  x x'$ (and similarly for the partial derivatives). Finally, the weight function $\psi$ is assumed measurable and bounded. Furthermore, for the central limit theorem, we will suppose it is of class $\Co^2$.

\subsection{On describing the epidemic by stochastic differential equations}

Treading in the steps of \cite{fourniermeleard} who fully developed a microscopic approach for ecological systems (see also \cite{trangdesdev, chithese} where age-structure is taken into account), we now describe the temporal evolution of the epidemic by a measure-valued SDE system driven by Poisson point measures.

The process $\{(S_t,I_t,R_t(da))\}_{t\geq 0}$ defined through the SDE system below takes its values in $\N\times \N \times \mathcal{M}_P(\R_+)$ and may be seen as a generalization of the classical vector-valued Markov processes arising in stochastic SIR models.

\begin{definition}\label{definitionmicroinit}
Consider a probability space $(\Omega,\mathcal{F},\mathbb{P})$, on which are defined:
\begin{enumerate}
\item a random vector $(S_0, I_0)$ with values in $(\N^*)^2$ such that $\mathbb{E}[S_0+I_0]<+\infty$ (at $t=0$, we assume that no one has been detected yet),
\item two independent Poisson point measures on $\R^2_+$, $Q^S(dv,du)$ and $Q^I(dv,du)$, with intensity $dv\otimes  du$, the Lebesgue measure on $\R_+^2$, and independent from the initial conditions $(S_0,I_0)$.
\end{enumerate}
\par Define $\{(S_t,I_t,R_t(da))\}_{t\geq 0}$ as the Markov process solution of the following system of SDEs:
\begin{equation}
\left \{
\begin{array}{lll}
S_t &= & S_0+\int_{v=0}^t \int_{u=0}^{\infty}\left(\mathbf{1}_{0\leq u\leq \lambda_0}-\mathbf{1}_{\lambda_0<u\leq \lambda_0+\mu_0 S_{v-}+\lambda_1(S_{v-},I_{v-})}\right) Q^S(dv,du)\\
I_t &= & I_0+ \int_{v=0}^t \int_{u=0}^{\infty} \mathbf{1}_{\lambda_0 <u\leq \lambda_0 +\lambda_1(I_{v-},S_{v-})}Q^S(dv,du)\\
&-& \int_{v=0}^t \int_{u=0}^{\infty} \mathbf{1}_{0\leq u\leq ( \mu_1+\lambda_2)I_{v-}+\lambda_3(I_{v-},\langle R_{v-},\psi\rangle)}Q^I(dv,du) \\
\langle R_t, f\rangle  & =   &\int_{v=0}^t \int_{u=0}^{\infty}f(0)\mathbf{1}_{0\leq u\leq \lambda_2I_{v-}+\lambda_3(I_{v-},\langle R_{v-},\psi\rangle)} Q^I(dv,du)
+ \int_{v=0}^t \int_{a=0}^{\infty} \partial_a f(a) R_v(da)dv,
\label{descriptionalgoinit}
\end{array}
\right.
\end{equation}
\noindent for all $f\in \Co^1_b(\R_+)$ the set of real bounded functions of class $\Co^1$ with bounded derivatives. We have denoted by $\partial_a f$ the gradient of $f$ and by $g(t-)$ the left limit in $t\in \R$ of any c\`adl\`ag function $g:\R\rightarrow\R$.
\end{definition}
Under \textbf{H1} and (i) of Definition \ref{definitionmicroinit}, it may be seen that there exists a unique strong (non explosive) solution to SDE (\ref{descriptionalgoinit}). The proof is a slight modification of the proofs of Section 2.2 in \cite{chithese}.

\begin{figure}[ht]
\begin{center}\hspace{0cm}
\begin{minipage}[b]{.5\textwidth}\centering
\includegraphics[width=0.9\textwidth,height=0.20\textheight,angle=0,trim=0cm 0cm 0cm 0cm]{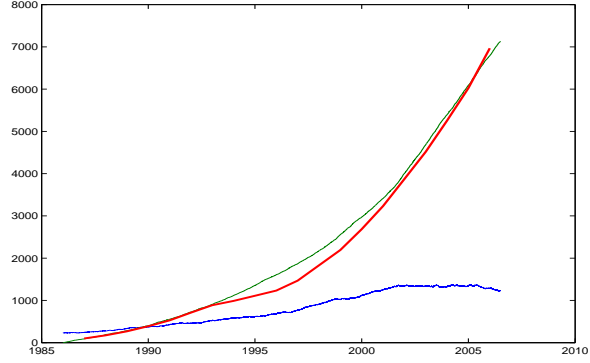}
\end{minipage}
\caption{\textit{Simulations for the Cuban epidemic: simulated evolution of the size of the I class (blue) and of the cumulated size of the R class (green). The bold red line stands for the observed cumulated size of the R class, computed from data related to the Cuban HIV epidemic over the period 1986-2006. We have chosen $\psi=\mathbf{1}_{[0,4]}$. In order to mimic the change in trend that can be observed between 1995 and 2000, two periods have been separately considered. During the first period (\textit{i.e.} the first fifteen years) the parameters of the simulation have been picked as follows: $S_0=5 \,10^6$, $I_0=230$, $\lambda_0=10^{-2}$, $\mu_0=10^{-8}$, $\mu_1=6.6\, 10^{-2}$, $\lambda_1=1.14\, 10^{-7}$, $\lambda_2=3.75 10^{-1}$ and $\lambda_3=6.55 \,10^{-5}$. In the second period, we have used: $\lambda_1=1.16\, 10^{-7}$, $\lambda_2=4.45\,10^{-1}$ and $\lambda_3=2.50\,10^{-4}$.}}\label{figuretrajectoires}
\end{center}
\end{figure}

From a practical perspective, we also emphasize that this approach paves the way for simulating trajectories of the epidemic process (see Fig. \ref{figuretrajectoires}). An attractive advantage of stochastic models in mathematical epidemiology indeed lies in their ability to reproduce certain variability features of the observed data. As an illustration, a simulated trajectory of  $\{(I_t, \langle R_t,\,\mathbf{1}\rangle )\}_{t\geq 0}$ generated from Eq. (\ref{descriptionalgoinit}) with jump rates $\lambda_1SI$ and $\lambda_3 I\langle R,\psi\rangle$ is displayed in Fig. \ref{figuretrajectoires}. For comparison purpose, the observed cumulative number of detected HIV+ individuals in Cuba (1986-2006) has been juxtaposed (see Section \ref{HIV} for further details). It can be seen that over the first 20 years of the epidemic, the simulated and observed curves for the cumulated number of detections are fairly closed to each other, although, between years 7 and 15, the observed curve is slightly below the simulated one (it should be noticed that, during these years, less funds were available for the management of the contact-tracing detection system because of the economic crisis that followed the collapse of the Soviet Union).

\subsection{Limiting behavior in long time asymptotics}
We now state a limit result for the epidemic process introduced above, as time goes to infinity. Refer to \textbf{A1} in the Appendix for a proof based on coupling analysis.

\begin{proposition} \label{ergodicity} Assume that $f(a)\rightarrow 0$ as $a\rightarrow \infty$. Considering the Markov process $\{(S_t,I_t,R_t(da))\}_{t\geq 0}$ introduced in Definition \ref{definitionmicroinit}, we have, whatever the initial conditions $(S_0,I_0)\in (\N^*)^2$, that $
(S_t,I_t,\langle R_t(da),f \rangle)\rightarrow (S_{\infty},0,0)$ in distribution as $t\rightarrow \infty$, denoting by $S_{\infty}$ a Poisson random variable of parameter $\lambda_0/\mu_0$.
\end{proposition}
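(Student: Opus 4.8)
The plan is to split the long-time analysis at the extinction time of the infection, $\tau_0 := \inf\{t\ge 0 : I_t = 0\}$, and to treat separately the behaviour before and after $\tau_0$. First I would record that $\{I=0\}$ is absorbing for the $I$-component: when $I_{v-}=0$ the infection rate vanishes, since by \textbf{H1} one has $\lambda_1(x,0)\le \bar\lambda_1\, x\cdot 0 = 0$, and no other mechanism in (\ref{descriptionalgoinit}) creates infectives. Hence $I_t=0$ for every $t\ge \tau_0$, and from $\tau_0$ onwards neither spontaneous detection (rate $\lambda_2 I_t$) nor contact-tracing detection (rate $\lambda_3(I_t,\langle R_t,\psi\rangle)\le \bar\lambda_3\, I_t\langle R_t,\psi\rangle$) can fire, so $R$ acquires no new atom after $\tau_0$.

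The central step, which I expect to be the main obstacle, is to show that $\tau_0<\infty$ almost surely, i.e. that the epidemic goes extinct in finite time whatever the parameters. I would obtain this through a monotone coupling on the total living population $N_t := S_t+I_t$. Inspecting the six events shows that $N$ increases only by immigration, at the constant rate $\lambda_0$, while infection ($E=2$) leaves $N$ unchanged and every other event decreases $N$ by one; the total downward rate is $\mu_0 S_t + (\mu_1+\lambda_2)I_t + \lambda_3(I_t,\langle R_t,\psi\rangle)\ge \underline{\mu}\,N_t$, with $\underline{\mu}:=\min(\mu_0,\mu_1+\lambda_2)>0$. Using the same driving Poisson measures, I would build an immigration--death (that is, $M/M/\infty$) process $\bar N$ with birth rate $\lambda_0$ and per-capita death rate $\underline{\mu}$ such that $N_t\le \bar N_t$ for all $t$, pathwise. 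Since $\bar N$ is an irreducible, positive-recurrent birth--death chain on $\mathbb{N}$ (reversible, with stationary law Poisson($\lambda_0/\underline{\mu}$)), it returns to $0$ in finite time almost surely, so $\tau_0\le \inf\{t:\bar N_t=0\}<\infty$ a.s. The delicate point of this step is the consistency of the coupling: one must order the down-jumps so that $N$ descends at least as fast as $\bar N$ whenever $N_t=\bar N_t$, which is precisely what the inequality between the two death rates guarantees (the up-jumps being simultaneous at rate $\lambda_0$).

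It then remains to identify the limit on the event $\{t>\tau_0\}$, where the three coordinates evolve independently. For the measure component, non-explosiveness of (\ref{descriptionalgoinit}) ensures that $R_{\tau_0}$ has a finite number $M:=\langle R_{\tau_0},\mathbf{1}\rangle$ of atoms; for $t\ge \tau_0$ each atom merely ages, whence $\langle R_t,f\rangle=\sum_{i=1}^{M} f(a_i+t-\tau_0)\to 0$ almost surely as $t\to\infty$, using $f(a)\to 0$ and $M<\infty$. For the susceptibles, once $I\equiv 0$ the infection rate disappears and $S$ reduces to a pure immigration--death process with birth rate $\lambda_0$ and death rate $\mu_0 S_t$; its classical ergodic theorem yields $S_t\to S_\infty$ in distribution with $S_\infty$ a Poisson variable of parameter $\lambda_0/\mu_0$, independently of the (a.s. finite) value $S_{\tau_0}$. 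Finally I would assemble the three convergences: since $I_t\to 0$ and $\langle R_t,f\rangle\to 0$ almost surely while $S_t\to S_\infty$ in law, a Slutsky-type argument gives the joint convergence $(S_t,I_t,\langle R_t,f\rangle)\to(S_\infty,0,0)$ in distribution, as asserted.
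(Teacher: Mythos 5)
Your proposal is correct, and its key step is genuinely different from the paper's. To prove that $\{I=0\}$ is reached in finite time, the paper does not use a domination argument: it modifies the process into $(S',I',R')$ by recruiting new infectives at a strictly positive rate whenever $I'=0$, so that $(S',I')$ becomes an irreducible Markov chain on $\N^2$ coinciding with $(S,I)$ up to the first hitting time of $\N\times\{0\}$, and then verifies a Foster--Lyapunov drift condition with test function $f(m,l)=m+l$ (yielding $-cf+d$ with $c=\min(\mu_0,\mu_1+\lambda_2)$, $d=\lambda_0$), invoking Meyn--Tweedie to get geometric recurrence; this gives the extinction time a finite \emph{exponential moment}, not merely a.s. finiteness. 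You instead dominate the total population $N=S+I$ pathwise by an immigration--death process $\bar N$ with immigration rate $\lambda_0$ and per-capita death rate $\underline{\mu}=\min(\mu_0,\mu_1+\lambda_2)$ — note that the same constant appears in both arguments, as it must — and conclude from positive recurrence of $\bar N$; your identification of the order-preservation requirement at equality as the delicate point of the coupling is accurate (one small caveat: when $\bar N_t>N_t$ the death rate of $\bar N$ need not be dominated by that of $N$, so the extra down-jumps of $\bar N$ require auxiliary randomness beyond $Q^S$ and $Q^I$, a routine construction detail). Your route is more elementary, avoiding general-state-space Markov theory, and in fact the hitting time of $0$ by $\bar N$ also has exponential moments, so nothing quantitative is truly lost. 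Two further points where your write-up is actually more complete than the paper's: you explicitly verify that $\langle R_t,f\rangle\rightarrow 0$ (finitely many atoms at $\tau_0$, pure aging afterwards, $f(a)\rightarrow 0$) and assemble the joint limit by a Slutsky argument, both of which the appendix leaves implicit; conversely, where the paper re-derives the Poisson$(\lambda_0/\mu_0)$ stationary law by solving the balance equations by induction, you simply quote this classical fact, which is legitimate.
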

The law of $S_\infty$ is the stationary distribution of the $\N$-valued immigration and death process which jumps from $k$ to $k+1$ with rate $\lambda_0$ and from $k$ to $k-1$ with rate $\mu_0 k$, and is obtained in Appendix \textbf{A1}. This ergodicity result shows that the time of extinction of the epidemic is almost surely finite, though it may be very long in practice. It is worth mentioning that in the situation of long-lasting epidemics, as in the HIV case, the long term behavior of the epidemic conditioned upon its non extinction may be refined by studying \textit{quasi-stationary measures} (see \cite{vanDoorn} for instance). We leave this question (far from trivial when the state space is not finite) for further research.

\section{Large population limits}
The overall purpose of this section is to provide a thorough analysis of the measure-valued SIR process introduced above from the 'large population approximation' perspective (one may refer to \cite{ethierkurtz} for an account of approximation theorems for Markov processes and to Chapt. 5 in \cite{andersonbritton} for applications of these concepts in approximating vector-valued SIR models), based on the recent techniques developed in \cite{fourniermeleard} and \cite{chithese, trangdesdev} for ecological systems.

\subsection{Renormalization}\label{sectionconstruction}
We consider a sequence, ($\{(S^{(n)}_t,I^{(n)}_t,R^{(n)}_t(da))\}_{t\geq 0}$, $n\in \mathbb{N}^*$), of SIR processes with contact-tracing. For $n\geq 1$, $\{(S^{(n)}_t,I^{(n)}_t,R^{(n)}_t(da))\}_{t\geq 0}$ corresponds to the stochastic process described in Definition \ref{definitionmicroinit}, starting from $(S_0^{(n)},I_0^{(n)})$ of size proportional to $n$ and with the following rate modifications: the immigration rate is $n\lambda_0$, the infection jump rate function is $n\lambda_1(S^{(n)}/n,I^{(n)}/n)$, while  the contact-tracing jump rate function is $n\lambda_3(I^{(n)}/n,\langle R^{(n)}, \psi\rangle/n)$. We denote by $(s^{(n)}_t,i^{(n)}_t,r^{(n)}_t(da))=(S^{(n)}_t/n,I^{(n)}_t/n,R^{(n)}_t(da)/n)$ the renormalized process obtained by re-weighting all individuals of the population by $1/n$. We assume furthermore that $(s^{(n)}_0,i^{(n)}_0)$ converges in probability to a deterministic couple $(s_0,i_0)\in \R_+^{*2}$ as $n\rightarrow \infty$. The moment condition below shall also be required in the sequel. Let $p> 2$.\\

\noindent {\bf Moment assumption $M_p$:  } $\sup_{n\in \N^*}\mathbb{E}[(s^{(n)}_0)^p+(i^{(n)}_0)^p]<+\infty$.\\

\noindent This moment assumption combined with Assumptions \textbf{H1} implies that the moments of order $p$ propagate on compact time intervals $[0,T]$ with $T>0$ (see \cite{fourniermeleard, chithese} Section 3.1.2).
Before writing down the martingale problem associated with $\{(s^{(n)}_t,i^{(n)}_t,r^{(n)}_t(da))\}_{t\geq 0}$ for a given $n\in \N^*$, let us give an insight into the way the renormalizations above may be interpreted in some important examples:
\begin{remark} {\sc (On the meaning of renormalization in basic examples)} In the case of homogeneous rate functions, the eventual impact of the renormalization on the jump rates may be described as follows.
\begin{itemize}
\item With $\lambda_0^{(n)}=n\lambda_0$, the immigration/birth rate is assumed proportional to the initial population size,
\item If the form chosen for $\lambda_1(S,I)$ is either $\lambda_1I$ or $\lambda_1SI/(I+S)$, the renormalized infection rate function, $\lambda_1 I^{(n)}$ or $\lambda_1 S^{(n)}I^{(n)}/(I^{(n)}+S^{(n)})$, is not affected by the scaling, while if one takes $\lambda_1(S,I)=\lambda_1SI$, the renormalized rate function $\lambda_1 S^{(n)}I^{(n)}/n$ decreases proportionately to $1/n$. This reflects the fact that for large scalings (corresponding to large "typical" population sizes) the risk of being contaminated by a given infectious individual is smaller that for small scalings.
\item The same remark holds for the contact-tracing rate function $\lambda_3(I, \langle R, \psi\rangle)$.
\end{itemize}
\end{remark}
\par The next proposition gives a semi-martingale decomposition for $\{(s^{(n)}_t,i^{(n)}_t,r^{(n)}_t(da))\}_{t\geq 0}$, which shall play a crucial role in our analysis.

\begin{proposition}\label{propmartingale}
Let $n\in \N^*$, $t\geq 0$ and $f:(a,u)\mapsto f_u(a)$ a function in $\Co_b^{1}(\R_+^2)$. Under \textbf{H1} and the moment condition \textbf{$M_p$} with $p>2$,

\begin{align}
\left(
\begin{array}{l}
M^{s,(n)}_t\\
M^{i,(n)}_t\\
M^{r,(n)}_t(f)
\end{array}
\right) = & \left(
\begin{array}{l}
s^{(n)}_t-s^{(n)}_0-\lambda_0 t + \int_{u=0}^t \{\mu_0 s^{(n)}_u+\lambda_1(s^{(n)}_u,i^{(n)}_u)\} \,du\\
i^{(n)}_t-i^{(n)}_0-\int_{u=0}^t \{\lambda_1(s^{(n)}_u,i^{(n)}_u) -(\mu_1+\lambda_2)i^{(n)}_u-\lambda_3(i^{(n)}_u,\langle r^{(n)}_u,\psi \rangle)\}du \\
\langle r^{(n)}_t,f_t\rangle - \int_{u=0}^t \{\langle r^{(n)}_u,\partial_a f_u +\partial_u f_u\rangle+f_u(0)(\lambda_2 i^{(n)}_u+\lambda_3(i^{(n)}_u,\langle r^{(n)}_u,\psi\rangle))\}du
\end{array}
\right)\label{martingale}
\end{align}

is a c\`adl\`ag $L^2$-martingale, with predictable quadratic variation given by:

\begin{align}
\left\{
\begin{array}{l}
  \langle M^{s,(n)}\rangle_t =  \frac{1}{n}\int_{u=0}^t \lambda_0+\{\mu_0 s^{(n)}_u+\lambda_1(s^{(n)}_u,i^{(n)}_u)\} \,du\nonumber\\
  \langle M^{i,(n)}\rangle_t =  \frac{1}{n}\int_{u=0}^t \{\lambda_1(s^{(n)}_u,i^{(n)}_u)+ (\mu_1 +\lambda_2) i^{(n)}_u+\lambda_3(i^{(n)}_u,\langle r^{(n)}_u,\psi\rangle)\}du\nonumber\\
  \langle M^{r,(n)}(f)\rangle_t =  \frac{1}{n}\int_{u=0}^t f_u^2(0) \{\lambda_2 i^{(n)}_u+\lambda_3(i^{(n)}_u,\langle r^{(n)}_u,\psi\rangle)\}du\nonumber\\
  \langle M^{s,(n)},M^{i,(n)}\rangle_t =  - \frac{1}{n}\int_{u=0}^t \lambda_1(s^{(n)}_u,I^{(n)}_u) \,du,\quad
\langle M^{s,(n)},M^{r,(n)}(f)\rangle_t = 0\nonumber\\
  \langle M^{i,(n)},M^{r,(n)}(f)\rangle_t =  - \frac{1}{n}\int_{u=0}^t f_u(0) \{\lambda_2 i^{(n)}_u+\lambda_3(i^{(n)}_u,\langle r^{(n)}_u,\psi\rangle)\}du.\label{crochetmn}
 \end{array}\right.
\end{align}
\end{proposition}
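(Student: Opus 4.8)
The plan is to start from the SDE system of Definition \ref{definitionmicroinit}, rewrite it for the renormalized triple $(s^{(n)},i^{(n)},r^{(n)})$ obtained after the prescribed rate modifications and the division by $n$, and then compensate the two driving Poisson point measures. Writing $\tilde{Q}^S=Q^S-dv\,du$ and $\tilde{Q}^I=Q^I-dv\,du$ for the associated compensated martingale measures, each stochastic integral in the rescaled SDE splits as a local martingale integral against $\tilde{Q}^S$ or $\tilde{Q}^I$, plus its compensator, which is the corresponding $du\,dv$-integral. Integrating the indicator integrands over the variable $u$ returns exactly the jump rates (e.g. $\int_0^\infty \mathbf{1}_{0\leq u\leq n\lambda_0}\,du=n\lambda_0$, and, after the rescaling of the infection and contact-tracing rates, $n\lambda_1(s^{(n)}_v,i^{(n)}_v)$ and $n\lambda_3(i^{(n)}_v,\langle r^{(n)}_v,\psi\rangle)$). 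Collecting these compensators, each reweighted by the $1/n$ factor carried by an individual jump, reproduces precisely the finite-variation drift terms subtracted in (\ref{martingale}), so that $M^{s,(n)}$, $M^{i,(n)}$ and $M^{r,(n)}(f)$ are by construction the purely martingale parts.

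For the measure-valued component a little more care is needed, since the test function $f_u(a)$ depends on both age and time. Here I would apply the jump SDE to $\langle r^{(n)}_t,f_t\rangle$ and isolate three sources of variation: the jumps created by detection events (spontaneous or by contact-tracing), which insert a new atom at age $a=0$ and hence contribute $f_u(0)$ against $Q^I$ at rate $\lambda_2 i^{(n)}_u+\lambda_3(i^{(n)}_u,\langle r^{(n)}_u,\psi\rangle)$; the deterministic aging of the measure, producing the transport term $\langle r^{(n)}_u,\partial_a f_u\rangle$; and the explicit time-dependence of $f$, producing $\langle r^{(n)}_u,\partial_u f_u\rangle$. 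Compensating the $Q^I$-integral and subtracting these drifts yields $M^{r,(n)}(f)$ as stated.

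To upgrade these local martingales to genuine $L^2$-martingales I would invoke the propagation of moments on $[0,T]$ that follows from $M_p$ with $p>2$, together with the domination $\lambda_k(x,x')\leq\bar{\lambda}_k xx'$ from \textbf{H1}: these control the $du$-integrals of the rates by second moments of $(s^{(n)},i^{(n)})$, which are finite, ensuring the predictable quadratic variations have finite expectation. The brackets themselves follow from the general rule that the predictable quadratic variation of a compensated Poisson integral is the compensator of the sum of squared jumps; since every event changes a count by one unit, hence the renormalized quantity by $1/n$, the square of each jump is $1/n^2$ while the jump intensity scales with $n$, so each bracket carries a single factor $1/n$ and equals $1/n$ times the $du$-integral of the relevant rate, giving the first three lines of (\ref{crochetmn}). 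For the cross-brackets I would use that $Q^S$ and $Q^I$ are independent, so $M^{s,(n)}$ and $M^{r,(n)}(f)$ share no jumps and $\langle M^{s,(n)},M^{r,(n)}(f)\rangle=0$; the remaining two come from the single events moving two coordinates at once — an infection, which lowers $s^{(n)}$ and raises $i^{(n)}$, and a detection, which lowers $i^{(n)}$ while adding $f_u(0)$ to $\langle r^{(n)},f\rangle$ — where the product of the two $\pm 1/n$ jumps, compensated at the corresponding rate, yields the stated expressions. I expect the main obstacle to be the rigorous treatment of the measure-valued, time-inhomogeneous component $M^{r,(n)}(f)$: justifying the decomposition of $\langle r^{(n)}_t,f_t\rangle$ for every $f\in\Co_b^1(\R_+^2)$, including the correct interplay between the aging transport term and the compensation, rather than the routine count-level bookkeeping that produces the $s^{(n)}$- and $i^{(n)}$-brackets.
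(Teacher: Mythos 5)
Your proposal is correct and takes essentially the same route as the paper: the paper's (omitted) proof rests precisely on the Poisson-point-measure representation of Definition \ref{definitionmicroinit}, compensating $Q^S$ and $Q^I$ to extract the drift and martingale parts, exactly as you do, with the details delegated to Theorem 5.2 of Fournier--M\'el\'eard and Theorem 3.1.8 of Tran's thesis. Your identification of the $1/n$ scaling of the brackets (jumps of size $1/n$ against intensities of order $n$), the vanishing cross-bracket from the independence of $Q^S$ and $Q^I$, and the two negative cross-brackets from the shared infection and detection jumps, together with the use of $M_p$ and the domination in \textbf{H1} to get the $L^2$ property, is exactly the argument those references carry out.
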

This result follows from the representation given in Definition \ref{definitionmicroinit}, in which the Poisson measures $Q^S$ and $Q^I$ are introduced. It may be established by following line by line the proof of Theorem 5.2 of \cite{fourniermeleard} and Theorem 3.1.8 of \cite{chithese}, technical details are thus omitted.

\subsection{Main results for the large population limit}
\subsubsection{Law of large numbers}
Before stating our first limit result for the sequence of renormalized SIR processes introduced above, we make clear the topology we consider. We denote by $\mathcal{M}_F(\R_+)$ the space of finite measures on $\R_+$, endowed with the metrizable weak convergence topology (see \cite{Rachev}). For all $n\in \mathbb{N}^*$, the sample paths $\{(s_t^{(n)},i_t^{(n)},r_t^{(n)})\}_{t\geq 0}$ belong to the Skorohod space $\mathbb{D}(\R_+,\R_+^2\times \mathcal{M}_F(\R_+))$ equipped with the metrizable $J_1$ topology (see $\S$ 2.1 in \cite{joffemetivier} for further details).
\par
Heuristically, since the quadratic variation of the martingale process displayed above is of order $1/n$, one obtains a deterministic limit by letting $n$ tend to infinity. As a matter of fact, consider the system of deterministic evolution equations, obtained by equating to zero the martingale process in Proposition \ref{propmartingale}:
\begin{equation}\label{eqgdepop}
\left \{
\begin{array}{lcl}
s_t&= & s_0+ \int_{u=0}^t \left(\lambda_0  - \mu_0 s_u-\lambda_1(s_u,i_u)\right) du\\
i_t&= & i_0+\int_{u=0}^t \left(\lambda_1(s_u,i_u)-(\mu_1+\lambda_2)i_u-\lambda_3(i_u,\langle r_u,\psi\rangle)\right)du\\
\langle r_t,f_t\rangle &= & \int_{u=0}^t \left\{ \int_{a=0}^{\infty}\left(\partial_u f(a,u)+ \partial_a f(a,u)\right)r_u(da)
+ f(0,u)\left(\lambda_2 i_u+\lambda_3(i_u,\langle r_u,\psi\rangle)\right) \right\}du
\end{array}\right.
\end{equation}
for all $f\in \Co^1_b(\R_+^2)$.
The result below states that there exists a unique (smooth) solution to this deterministic system, to which the sequence $\{(s^{(n)},i^{(n)},r^{(n)}(da))\}_{n\geq 1}$ converges in probability. This may be viewed as an extension to our measure-valued setup of the Law of Large Numbers stated in Theorem 5.2 of \cite{andersonbritton} for vector-valued SIR processes in large population asymptotics (see the references therein). A sketch of proof stands in Appendix \textbf{A2}.
\begin{theorem} {\sc (Law of Large Numbers)}\label{LLN}
Under \textbf{H1} and the moment condition \textbf{$M_p$} with $p>2$, as the size parameter $n$ tends to infinity, the sequence of processes $\{(s^{(n)},i^{(n)},r^{(n)}(da))\}_{n\in \N^*}$ converges in probability in $\mathbb{D}(\R_+,\R_+^2\times \mathcal{M}_F(\R_+))$ to the unique solution $\{(s_t,i_t,r_t(da))\}_{t\geq 0}$ of (\ref{eqgdepop}).
\begin{itemize}
\item[(i)] for all $t>0$, the measure $r_t(da)$ is absolutely continuous with respect to the Lebesgue measure. Denoting by $\rho_t(a)$ its density, the map $(a,t)\mapsto \rho_t(a)$ is differentiable on the set $\{a\leq t\}$ containing its support,
\item[(ii)] the map $t\mapsto(s_t,i_t)$ is of class $\Co^1$.
\end{itemize}
\end{theorem}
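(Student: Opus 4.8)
The plan is to run the classical tightness--identification--uniqueness programme for measure-valued population processes (following \cite{fourniermeleard, joffemetivier, chithese, trangdesdev}), the one genuinely new feature being the nonlinear feedback of the age-structured component through the contact-tracing term $\lambda_3(i,\langle r,\psi\rangle)$. The prerequisite is a set of uniform moment bounds: combining the moment assumption $M_p$ with the domination $\lambda_k(x,x')\le\bar\lambda_k\,xx'$ of \textbf{H1}, one propagates $\sup_{n}\E\big[\sup_{t\le T}\big((s^{(n)}_t)^p+(i^{(n)}_t)^p+\langle r^{(n)}_t,\mathbf{1}\rangle^p\big)\big]<+\infty$ on every compact $[0,T]$, exactly as in Section 3.1.2 of \cite{fourniermeleard, chithese}. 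These bounds simultaneously control the total mass of $r^{(n)}$ and all the jump rates, and underpin every estimate below.

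First I would prove tightness of the laws of $\{(s^{(n)},i^{(n)},r^{(n)})\}_n$ in $\mathbb{D}(\R_+,\R_+^2\times\mathcal{M}_F(\R_+))$. Since $\mathcal{M}_F(\R_+)$ carries the weak topology, it suffices to combine a compact-containment condition---granted by the uniform bound on $\sup_n\E[\langle r^{(n)}_t,\mathbf{1}\rangle]$---with tightness of the scalar projections $\{\langle r^{(n)},f\rangle\}_n$, for $f$ ranging over a countable convergence-determining subfamily of $\Co^1_b(\R_+)$, and of $\{s^{(n)}\}_n$ and $\{i^{(n)}\}_n$. Each scalar coordinate is treated through the semimartingale decomposition of Proposition \ref{propmartingale} via the Aldous--Rebolledo criterion: the predictable brackets in \eqref{crochetmn} are $O(1/n)$, so the martingale parts are asymptotically negligible, while the finite-variation parts have integrands bounded in $L^p$ by the moment estimates, hence uniformly small increments over short time intervals.

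Next I would identify the limit points and prove uniqueness; I expect this to be the crux. Fixing $f$, the bracket estimate gives $\E[(M^{r,(n)}_t(f))^2]=O(1/n)\to 0$, and similarly for the $s$- and $i$-martingales, so along any converging subsequence the drift terms of \eqref{martingale} pass to the limit and $(s,i,r)$ solves \eqref{eqgdepop}. A first subtlety is that $\psi$ may be discontinuous (e.g. $\psi=\mathbf{1}_{[0,\tau]}$), so $r\mapsto\langle r,\psi\rangle$ is not weakly continuous; this is handled by establishing absolute continuity of the limiting marginals $r_t$ (through control of the mean measure $\E[r^{(n)}_t]$), whence the discontinuity set of $\psi$ is $r_t$-negligible and $\langle r^{(n)}_t,\psi\rangle\to\langle r_t,\psi\rangle$ by the Portmanteau theorem. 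The hard part is uniqueness of \eqref{eqgdepop}: as the weak topology on $\mathcal{M}_F(\R_+)$ is not normable, Gronwall cannot be applied to $r_t$ directly. The remedy is to exploit that $r$ is \emph{driven} by its injection at age $0$; bounding, for two solutions with equal initial data, $|s_t-\tilde s_t|+|i_t-\tilde i_t|+\sup_{\|f\|_\infty+\|\partial_a f\|_\infty\le 1}|\langle r_t-\tilde r_t,f\rangle|$ by means of the local Lipschitz bounds of \textbf{H1}---the feedback through $\langle r,\psi\rangle$ being absorbed into the Lipschitz constant of $\lambda_3$ thanks to the a priori mass bounds---a Gronwall argument forces the difference to vanish. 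Uniqueness closes the identification and, the limit being deterministic, convergence in law automatically upgrades to convergence in probability.

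Finally I would settle the regularity claims. For (ii), uniqueness makes $u\mapsto i_u$ and $u\mapsto\langle r_u,\psi\rangle$ continuous, so the integrands in the first two lines of \eqref{eqgdepop} are continuous and $t\mapsto(s_t,i_t)$ is of class $\Co^1$, with derivatives read off directly from \eqref{eqgdepop}. For (i), the third line of \eqref{eqgdepop} is the weak form of the aging equation $\partial_t\rho_t(a)+\partial_a\rho_t(a)=0$ on $\{a>0\}$, with boundary value $\rho_t(0)=\lambda_2 i_t+\lambda_3(i_t,\langle r_t,\psi\rangle)$ and null initial datum (no one is detected at $t=0$). Integrating along characteristics yields the explicit density
\[
\rho_t(a)=\big(\lambda_2\, i_{t-a}+\lambda_3(i_{t-a},\langle r_{t-a},\psi\rangle)\big)\,\mathbf{1}_{\{a\le t\}},
\]
which is absolutely continuous with support in $\{a\le t\}$, giving the first half of (i). Writing $\rho_t(a)=g(t-a)$ with $g(u)=\lambda_2 i_u+\lambda_3(i_u,\langle r_u,\psi\rangle)$, differentiability of $(a,t)\mapsto\rho_t(a)$ on $\{a\le t\}$ reduces to that of $g$, which follows from $i\in\Co^1$, from $\lambda_3\in\Co^1$, and from the $\Co^1$ regularity of $u\mapsto\langle r_u,\psi\rangle$ obtained by differentiating the Volterra relation $\langle r_t,\psi\rangle=\int_0^t\psi(t-u)\,g(u)\,du$.
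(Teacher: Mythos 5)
Your overall programme---moment propagation, tightness via Aldous--Rebolledo applied to the semimartingale decomposition of Proposition \ref{propmartingale}, identification of limit points through the vanishing $O(1/n)$ brackets, uniqueness by a Gronwall argument, and the automatic upgrade from convergence in law to convergence in probability because the limit is deterministic---is the same as the paper's (Appendix \textbf{A2}). The only cosmetic difference in the stochastic part is that the paper first proves tightness for the \emph{vague} topology (Roelly's criterion) and then upgrades to the weak topology by controlling the total mass $\langle r^{(n)},1\rangle$ (M\'el\'eard--Roelly), whereas you work with the weak topology directly via compact containment plus scalar projections. Your derivation of the regularity claims (i)--(ii) by integration along characteristics matches the paper's observation that $\rho_t(a)=\rho_{t-a}(0)$, and your explicit treatment of a discontinuous $\psi$ in the identification step (absolute continuity of $r_t$ plus Portmanteau) addresses a real point that the paper's sketch silently defers to the cited references: \textbf{H1} only assumes $\psi$ measurable and bounded for the LLN.

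There is, however, a genuine soft spot in your uniqueness step, which is inconsistent with the care you took in identification. You run Gronwall on $|s_t-\tilde s_t|+|i_t-\tilde i_t|+\sup_{\|f\|_\infty+\|\partial_a f\|_\infty\le 1}|\langle r_t-\tilde r_t,f\rangle|$, and to close the loop you must dominate $|\langle r_u-\tilde r_u,\psi\rangle|$ by a constant times this quantity. That domination holds only when $\psi$ itself is of class $\Co^1_b$ (then it is, up to normalization, one of the admissible test functions); for $\psi=\mathbf{1}_{[0,\tau]}$, the paper's own Example 1, it fails: with $\mu=\delta_{\tau-\eps}-\delta_{\tau+\eps}$ (or smoothed, absolutely continuous versions thereof), the dual norm over $\Co^1$ test functions is $O(\eps)$ while $|\langle \mu,\psi\rangle|=1$. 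So for the class of $\psi$ allowed by \textbf{H1} the Gronwall inequality does not close as written. The repair is contained in your own last paragraph: first show (by plugging test functions of the form $f(a,u)=\phi(a+t-u)$, $\phi\in\Co^1_b$, into the third line of (\ref{eqgdepop}) and approximating) that \emph{any} solution has the transport structure $r_t(da)=g(t-a)\mathbf{1}_{\{a\le t\}}da$ with $g(u)=\lambda_2 i_u+\lambda_3(i_u,\langle r_u,\psi\rangle)$, so that $\langle r_t,\psi\rangle=\int_0^t\psi(t-u)g(u)\,du$ for the merely measurable $\psi$ as well; then run Gronwall on the scalar quantity $h(t)=|s_t-\tilde s_t|+|i_t-\tilde i_t|+|\langle r_t-\tilde r_t,\psi\rangle|$, using $|\langle r_t-\tilde r_t,\psi\rangle|\le\|\psi\|_\infty\int_0^t|g(u)-\tilde g(u)|\,du\le C\int_0^t h(u)\,du$ from the local Lipschitz bounds on $\lambda_3$. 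Once $h\equiv 0$, the boundary fluxes $g$ and $\tilde g$ coincide, hence so do the densities and the measures, which completes uniqueness in the generality actually claimed by the theorem.
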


By virtue of the regularity properties mentioned above, $(s_t,i_t,\rho_t)_{t\geq 0}$ also solves the following PDE system with initial conditions $(s_0,i_0,\textbf{0})$, $\mathbf{0}$ denoting the constant function equal to zero:

\begin{equation} \label{pdeclassique3}
\left \{
\begin{array}{lcl}
\frac{ds_t}{dt}&= & \lambda_0  - \mu_0 s_t-\lambda_1(s_t,i_t)\\
\frac{di_t}{dt}&= & \lambda_1(s_t,i_t)-(\mu_1+\lambda_2)i_t-\lambda_3\left(i_t,\int_{\R_+}\psi(a)\rho_t(a)da\right)\\
\frac{\partial \rho_t}{\partial t}(a) &=& -  \partial_a \rho_t(a)\\
\rho_t(0)&=&\lambda_2 i_t+\lambda_3\left(i_t,\int_{\R_+}\psi(a)\rho_t(a)da\right).
\end{array}\right.
\end{equation}

This PDE system may be seen as a generalization of deterministic epidemic models introduced in \cite{Hector} (see also the references therein), taking into account the effects of the contact-tracing strategy and defined through a classical differential system. Besides, we point out that the increase of the time since detection ('detection aging') is translated into a transport equation (the third equation in (\ref{pdeclassique3})), with a boundary condition for $a=0$ (fourth equation in (\ref{pdeclassique3})). This is a well-known fact in age-structured population models (see \cite{webb} for instance). It is easy to prove that the solution is of the form $\rho_t(a)=\rho_{t-a}(0)$. One then recovers a \textit{delay-differential equation system} (DDE), similar as those recently considered in epidemic modelling (see \cite{BrauerCastillo, BusenbergCooke, vandenDriessche96, vandenDriessche02, kuang} and the references therein for instance). In this respect, it should be noticed that the DDE system (\ref{pdeclassique3}) may be classically simplified when the weight function $\psi$ is exponential as in Example 2. In this case, it can be replaced by a system of PDEs with finite dimensional variables only.

\subsubsection{Central limit theorem}\label{sectiontcl}

In order to refine the limit result stated in Theorem \ref{LLN}, we establish a central limit theorem (\textit{CLT}), describing how the renormalized epidemic process $(s^{(n)},i^{(n)},r^{(n)}(da))$ fluctuates around the solution of (\ref{pdeclassique3}). This is an adaptation of the results obtained in Chapter 4 of \cite{chithese} for age-structured birth and death processes. Let $T>0$ and consider the sequence of fluctuation processes:
\begin{eqnarray}
\eta_t^{(n)}&= & \left(
\begin{array}{c}
\eta^{s,{(n)}}_t\\
\eta^{i,{(n)}}_t\\
\eta^{r,{(n)}}_t(da)
\end{array}
\right)=\sqrt{n}\left(\begin{array}{c}
s^{(n)}_t-s_t\\
i^{(n)}_t-i_t\\
r^{(n)}_t(da)-r_t(da)
\end{array}\right),\label{deffluctuations}
\end{eqnarray}
$t\in [0,T]$, $n\in \mathbb{N}^*$, with values in $\R\times \R\times \mathcal{M}_S(\R)$, where $\mathcal{M}_S(\R)$ denotes the space of signed measures on $\R$ equipped with its Borel $\sigma$-field.\\

\noindent {\bf Functional preliminaries.} Since $\mathcal{M}_S(\R)$ embedded with the weak convergence topology is not metrizable, we will in fact consider the sequence $(\eta^{r,(n)}(da))_{n\in \N^*}$ as a sequence of processes with values in a well-chosen distribution space. In order to prove its tightness, we link this distribution space to certain Hilbert spaces. We are inspired by the works of M\'etivier \cite{metivierIHP}, M\'el\'eard \cite{meleardfluctuation}, and consider the following spaces:
\begin{definition} \label{deffunctional}
For $\beta \in \N$, $\gamma\in \R_+$, $C^{\beta,\gamma}$ is the space of functions $f$ of class $\mathcal{C}^{\beta}$ such that $\forall k\leq \beta$, $ |f^{(k)}(a)|/(1+|a|^\gamma)$ vanishes as $|a|\rightarrow +\infty$, equipped with the norm:
\begin{equation}
\|f\|_{C^{\beta,\gamma}} :=\sum_{k\leq \beta}\sup_{a\in \R}\frac{|f^{(k)}(a)|}{1+|a|^\gamma}.\label{normec}
\end{equation}The spaces $C^{\beta,\gamma}$ are Banach spaces and we denote by $C^{-\beta,\gamma}$ their dual spaces. \par $W_0^{\beta,\gamma}$ is the closure of the space $\Co^\infty_K(\R)$ of infinitely differentiable functions $f$ with compact support in $\R$ for the norm $\|.\|_{W_0^{\beta,\gamma}}$ defined by:
\begin{equation}
\|f\|_{W_0^{\beta,\gamma}}^2 :=\int_{\R}\sum_{k\leq \beta}\frac{|f^{(k)}(a)|^2}{1+|a|^{2\gamma}} da. \label{normew0}
\end{equation}The spaces $W_0^{\beta, \gamma}$ are Hilbert spaces and we denote by $W_0^{-\beta,\gamma}$ their dual spaces.
\end{definition}
In the following, we will be interested in the following spaces (see \cite{adams, meleardfluctuation} for the continuous injections).
\begin{align}
& C^{4,0}\hookrightarrow W_0^{4,1}\hookrightarrow_{H.S.} W_0^{3,2}\hookrightarrow C^{2,2}\hookrightarrow C^{1,2}\hookrightarrow W_0^{1,3}\hookrightarrow C^{0,4},\nonumber\\
\mbox{and }
 & C^{-0,4}\hookrightarrow W_0^{-1,3}\hookrightarrow C^{-1,2}\hookrightarrow C^{-2,2}\hookrightarrow W_0^{-3,2}\hookrightarrow_{H.S.}W_0^{-4,1}\hookrightarrow C^{-4,0}.\label{plongement2}
\end{align}
The fluctuation processes are now viewed as taking their values in the dual space $C^{-2,2}$ for technical reasons. The space $C^{-2,2}$ is continuously included in $W^{-3,2}_0$ which in turn is included in $W_0^{-4,1}$ by a Hilbert-Schmidt type embedding. It is in this space that the convergence in distribution stated in the next theorem is proved (see Appendix \textbf{A3} for a detailed sketch of the proof).

\begin{theorem}\label{theoremecentrallimiteenonce} {\sc (Central Limit Theorem)} Suppose that \textbf{H1} and the moment assumption \textbf{$M_p$} with $p>2$ are fulfilled and that $\sup_{n\in \N^*}\mathbb{E}\left(|\eta^{s,(n)}_0|^2+|\eta^{i,(n)}_0|^2\right)<+\infty$. Then, ($\{\eta^{(n)}_t\}_{t\in [0,T]})_{n\geq 1}$, as a sequence of random variables with values in $\mathbb{D}([0,T],\R\times \R\times W^{-4,1}_0)$, converges in law to the unique solution of the following equation: $\forall t\in [0,T],$
\begin{equation}
\eta_t=  \eta_0 +W_t+ \int_{u=0}^t \Psi((s_u,i_u,r_u),\;\eta_u)du,\label{evolutionlimitetcl}
\end{equation}
\begin{multline*}
\mbox{where }\quad\Psi((s_u,i_u,r_u),\;\eta_u)\\
=\left(\begin{array}{c}
\mu_0 \eta^s_u+\partial_S\lambda_1(s_u,i_u)\eta^s_u+\partial_I\lambda_1(s_u,i_u)\eta^i_u\\
\partial_S\lambda_1(s_u,i_u)\eta^s_u+[\partial_I\lambda_1(s_u,i_u)+\mu_1+\lambda_2+\partial_I \lambda_3(i_u,\langle r_u,\psi\rangle)] \eta^i_u
+\partial_R \lambda_3(I_u,\langle r_u,\psi\rangle)\langle \eta^r_u,\psi\rangle\\
\delta_0[\lambda_2+\partial_I\lambda_3(i_u,\langle r_u,\psi\rangle)]\eta^i_u+\delta_0
\partial_R\lambda_3(i_u,\langle r_u,\psi\rangle) \langle \eta^r_u,\psi\rangle
+J_u^*\eta^r_u,
\end{array}\right)
\end{multline*}
with $\forall t\in [0,T],\,\forall f\in W_0^{4,1}(\hookrightarrow C^{2,2}),$
$\langle J_t^*\eta^r_t, f\rangle =\int_{\R_+}\partial_a f(a)\eta^r_t(da),
$ and where $W=(W^s, W^i, W^r)$ is a continuous, centered, square-integrable Gaussian process of $\Co([0,T],\R^2\times W_0^{-4,1})$. For every $t\in [0,T]$ and all $f\in W_0^{4,1}$, the quadratic variation of $\left(W^s_t,W^i_t, \langle W^r,f\rangle_t\right)_{t\in [0,T]}$ is given by:
\begin{eqnarray}
\left\{
\begin{array}{ll}
  \langle W^s\rangle_t &=\int_0^t \left(\lambda_0+\mu_0 s_u+\lambda_1(s_u,i_u)\right) du,\\
  \langle W^i\rangle_t &= \int_0^t \left(\lambda_1(s_u,i_u)+(\mu_1+\lambda_2)i_u+\lambda_3(i_u,\langle r_u,\psi\rangle)\right)du\\
  \left\langle   W^{r}(f)\right\rangle_t&=\int_0^t f^2_u(0)\left(\lambda_2 i_u+\lambda_3(i_u,\langle r_u,\psi\rangle)\right)du\\
  \langle W^s,W^i\rangle_t&=-\int_0^t \lambda_1(s_u,i_u)du,\quad \left\langle W^s,\langle W^r,f\rangle\right\rangle_t=0\\
   \left\langle W^i, \langle W^r,f\rangle\right\rangle_t&=-\int_0^t f_u(0)\left(\lambda_2i_u+\lambda_3(i_u,\langle r_u,\psi\rangle)\right)du
\end{array}. \right.\label{crochetlimitetcl}
\end{eqnarray}
\end{theorem}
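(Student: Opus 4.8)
The plan is to start from the semimartingale decomposition of Proposition \ref{propmartingale}. Applying it to the renormalized process and subtracting the deterministic equations (\ref{eqgdepop}) satisfied by the limit, after multiplication by $\sqrt{n}$ one obtains a semimartingale decomposition for the fluctuation process of the form
\[
\eta_t^{(n)} = \eta_0^{(n)} + \sqrt{n}\,M_t^{(n)} + \int_0^t \Psi^{(n)}_u\,du,
\]
where $\sqrt{n}M^{(n)}$ is the rescaled martingale of (\ref{martingale}) and the drift $\Psi^{(n)}_u$ gathers the differences $\sqrt{n}(\lambda_k(s_u^{(n)},\dots)-\lambda_k(s_u,\dots))$ for $k\in\{1,3\}$ together with the transport and boundary terms of the $r$-component. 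The first step is to linearize this drift: since $\lambda_1,\lambda_3\in\Co^1(\R_+^2)$ by \textbf{H1} and $\psi$ is of class $\Co^2$, a first-order Taylor expansion turns, e.g., $\sqrt{n}(\lambda_1(s_u^{(n)},i_u^{(n)})-\lambda_1(s_u,i_u))$ into $\partial_S\lambda_1\,\eta_u^{s,(n)}+\partial_I\lambda_1\,\eta_u^{i,(n)}$ up to a second-order remainder of order $\eta^2/\sqrt{n}$ that vanishes in probability thanks to Theorem \ref{LLN} and the propagation of $p$-th moments. This identifies the operator $\Psi$; the transport part $J_u^*$ and the $\delta_0$ boundary terms are read off directly from the third line of (\ref{martingale}).

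I would next treat the martingale part. Multiplying the predictable quadratic variations (\ref{crochetmn}) by $n$ exactly cancels the $1/n$ prefactor, and by Theorem \ref{LLN} the integrands converge to the deterministic expressions evaluated along $(s_u,i_u,r_u)$; this produces precisely the covariance structure (\ref{crochetlimitetcl}). Since the jumps of $\sqrt{n}M^{(n)}$ are of order $1/\sqrt{n}$ and hence satisfy a Lindeberg-type condition, the Rebolledo central limit theorem for c\`adl\`ag martingales yields the convergence of $\sqrt{n}M^{(n)}$ to the continuous centered square-integrable Gaussian process $W$ with the announced brackets.

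The crucial analytic step is tightness of $(\eta^{(n)})_n$ in $\mathbb{D}([0,T],\R\times\R\times W_0^{-4,1})$. Here I would exploit the chain of embeddings (\ref{plongement2}): the semimartingale decomposition is valid in the dual space $C^{-2,2}\hookrightarrow W_0^{-3,2}$, and the Hilbert--Schmidt embedding $W_0^{-3,2}\hookrightarrow_{H.S.}W_0^{-4,1}$ is used to bound the $W_0^{-4,1}$-norm of the bracket operator of the measure-valued martingale by a summable series. Uniform second-moment bounds $\sup_n\E[\sup_{t\leq T}\|\eta_t^{(n)}\|^2]<+\infty$ follow from $M_p$ with $p>2$ together with the hypothesis $\sup_n\E(|\eta_0^{s,(n)}|^2+|\eta_0^{i,(n)}|^2)<+\infty$; combined with an Aldous--Rebolledo criterion applied to the finite-dimensional $(s,i)$-part and to $\langle\eta^{r,(n)},f\rangle$ for $f$ in a dense subset, this gives tightness.

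Finally, along any convergent subsequence I would pass to the limit in the decomposition: the drift converges by the linearization above and the continuity of the coefficients along the deterministic flow, while the martingale part converges to $W$, so any limit point solves (\ref{evolutionlimitetcl}). Uniqueness of this affine equation follows from a Gronwall argument: the difference $\zeta$ of two solutions driven by the same $W$ satisfies $\zeta_t=\int_0^t\Psi((s_u,i_u,r_u),\zeta_u)\,du$, and since $\partial_S\lambda_1,\partial_I\lambda_1,\partial_I\lambda_3,\partial_R\lambda_3$ evaluated along the bounded $\Co^1$ trajectory $t\mapsto(s_t,i_t)$ are bounded on $[0,T]$, the norm $\|\zeta_t\|$ is controlled and forced to vanish. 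The main obstacle is precisely the tightness of the distribution-valued component: the transport operator $J^*$ loses one derivative and is unbounded, so the whole argument must be carried out in the scale of weighted Sobolev spaces of (\ref{plongement2}), and it is the Hilbert--Schmidt link $W_0^{-3,2}\hookrightarrow_{H.S.}W_0^{-4,1}$ that absorbs this loss of regularity and makes the uniform estimates and the Aldous--Rebolledo verification go through.
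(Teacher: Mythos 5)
Your overall architecture coincides with the paper's: semimartingale decomposition of $\eta^{(n)}$ into $\sqrt{n}M^{(n)}$ plus a drift, Taylor linearization of $\sqrt{n}(\lambda_k(\cdot^{(n)})-\lambda_k(\cdot))$ to identify $\Psi$, a martingale functional CLT for the noise part, tightness in the weighted Sobolev scale (\ref{plongement2}) via the Hilbert--Schmidt embedding $W_0^{-3,2}\hookrightarrow_{H.S.}W_0^{-4,1}$, identification of limit points, and Gronwall uniqueness with frozen coefficients. However, there is a genuine gap at the step you call ``uniform second-moment bounds'': you assert that
$\sup_{n}\mathbb{E}\bigl[\sup_{t\leq T}\|\eta^{(n)}_t\|^2\bigr]<+\infty$
follows from \textbf{$M_p$} and the hypothesis on the initial fluctuations. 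This is exactly what the paper points out is \emph{not} available: the straightforward estimate gives only $\mathbb{E}[\sup_{t\in[0,T]}\|\eta^{r,(n)}_t\|^2_{W_0^{-1,3}}]\leq C(T,n)$ with a constant depending on $n$, which is useless for tightness. The obstruction is that under \textbf{H1} the rates $\lambda_1,\lambda_3$ and their derivatives are only \emph{locally} Lipschitz, with Lipschitz constants $L_k(N)$ depending on the size $N$ of the region, and the state space is unbounded (the rates may grow like $xx'$). Consequently the Gronwall argument behind any uniform moment bound for $\eta^{(n)}$, and likewise your bound $|A(n,u)|\leq C(|\eta^{s,(n)}_u|^2+|\eta^{i,(n)}_u|^2)/\sqrt{n}$ on the Taylor remainder, can only be run on the event where $(s^{(n)},i^{(n)},\langle r^{(n)},1\rangle,\langle r^{(n)},|a|\rangle)$ stays below a fixed level $N$.

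The paper resolves this with a localization device that is absent from your proposal: the stopping times $\zeta^{(n)}_N$ of (\ref{defzetaNn}), the fact (\ref{localisation}) that $\mathbb{P}(\zeta^{(n)}_N\leq T)\to 0$ once $N$ dominates the suprema of the deterministic limit (a consequence of the Law of Large Numbers), and the localized uniform estimates of Lemma \ref{lemmeproptcl}, which hold only up to time $T\wedge\zeta^{(n)}_N$ with constants $C(N,T)$. Every probability bound in the tightness proof (Aldous-type conditions for $\widetilde{V}^{(n)}$ and for the brackets and traces of $\widetilde{M}^{(n)}$) and in the identification of limit points is then split into a localized moment term plus the vanishing term $\mathbb{P}(\zeta^{(n)}_N\leq T)$; moreover, for the martingale part one needs the separate observation that \emph{its} estimates can be made free of both $n$ and $N$, which is what allows the Jacod--Shiryaev (equivalently Rebolledo) martingale CLT to be applied unconditionally, and the freezing of the density dependence is what makes the Gronwall constants in the uniqueness step independent of $N$. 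Without introducing this localization, your moment estimates, your treatment of the Taylor remainder, and your Aldous--Rebolledo verification do not go through as stated; with it, your outline matches the paper's proof.
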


\section{Statistical inference by maximum likelihood estimation}
We now turn to the problem of estimating the jump rates governing the dynamics of the epidemic, in a parametric setting. Although, generally, not all events related to the epidemic are observable in practice, in this premier work we deal with the ideal case where one dispose of \textit{complete epidemic data} by means of \textit{maximum likelihood estimation} (MLE). Indeed, MLE methods for complete data are of interest from a practical viewpoint, insofar as in certain situations they may be readily used after implementing adequate augmentation data procedures. They constitute, besides, the maximization step of (Monte Carlo-) EM procedures, which are extensively used for analyzing infectious disease data (see \cite{Becker} for instance). However, we stress that the question of validly implementing EM-procedures in a continuous-time process setup is not without pitfalls (see \cite{roberts, beskos} and refer to \cite{ionides} for an account of sequential variants of the MCEM algorithm, tailored for such a framework). Developing inference methods for the present model based on incomplete data (on the incidence process solely, for instance) shall be the scope of further research.\\

\par We start with some definitions. Let us fix the renormalization parameter $n$. We associate to the process $(s^{(n)}_t,i^{(n)}_t,r^{(n)}_t)_{t\geq 0}$ the sequence $\{E_k^{(n)},T_k^{(n)}\}_{k\in \N^*}$ where $\{T^{(n)}_k\}_{k\in \mathbb{N}^*}$ is the sequence of successive jump times of the process, and where $E^{(n)}_k\in\mathcal{E}= \{0,\,\ldots,\,5\}$ is the type of event occurring at time $T^{(n)}_k$, $k\geq 1$ (see Section \ref{dynamics}). By convention, the time origin is $T^{(n)}_0=0$. For notational simplicity only, the rates $\mu_0$, $\mu_1$, $\lambda_0$, $\lambda_1$ are supposed to be known and we focus on the estimation of the detection rates $\lambda_2$ and $\lambda_3$ (extensions to a more general statistical framework are straightforward, in particular when estimating the infection rate is the matter). We suppose that the latter are entirely determined
by a parameter $\theta$, taking values in a set $\Theta \subset \mathbb{R}^d$, $d\geq 1$: $\lambda_2=\lambda_2(\theta)$ and $\lambda_{3}(.,.)=\lambda_{3}(.,.,\theta)$. We set $\{\mathbb{P}_{\theta}\}_{\theta \in \Theta}$
the resulting family of probability measures on the underlying space $(\Omega, \mathcal{A})$. We denote by $\tilde{\mathbb{P}}$ the probability measure
on $(\Omega, \mathcal{A})$ corresponding to the case when the $(E^{(n)}_{k})$'s are i.i.d. and uniformly distributed on $\mathcal{E}$, independent from the durations
$\Delta T^{(n)}_{k}=T^{(n)}_k-T^{(n)}_{k-1}$, $k\in \mathbb{N}$, supposed i.i.d. and exponentially distributed with mean $n/6$.

\subsection{The likelihood function}
Let $T>0$ and $n\in \N^*$. We denote by $K^{(n)}_T=\sum_{k\geq 1}\mathbf{1}_{\{T^{(n)}_k\leq T\}}$ the total number of events occurring before time $T$ and write the likelihood of $\{(E^{(n)}_k,T^{(n)}_k)\}_{1\leq k\leq K^{(n)}_T}$. The complete history of the epidemic until time $T$ is described by the $\sigma$-field $\mathcal{F}^{(n)}_t=\sigma\{s_u^{(n)},i_u^{(n)},r_u^{(n)},\, u\leq t\}$. With the notation above, the statistical model $(\Omega, \mathcal{A}, \{\mathbb{P}_{\theta}\}_{\theta \in \Theta})$ is dominated along the filtration $(\mathcal{F}^{(n)}_t)_{t\geq 0}$ and $\tilde{\mathbb{P}}$ is a dominating probability measure. In particular, for all $\theta\in \Theta$, we have on $\mathcal{F}^{(n)}_T$ that $\mathbb{P}_{\theta}=  \mathcal{L}^{(n)}_T(\theta) \cdot \tilde{\mathbb{P}}$ with the likelihood:
{\small \begin{align}
\mathcal{L}^{(n)}_T(\theta)=& \exp\left(nT - \int_{u=0}^T (n\lambda_0+\mu_0 n s^{(n)}_u+n\lambda_1( s^{(n)}_u, i^{(n)}_u) +(\mu_1 n+\lambda_2(\theta) n) i^{(n)}_u+n\lambda_3 ( i^{(n)}_u,\langle r^{(n)}_u,\psi\rangle,\theta))du\right)\nonumber\\
\times &  \prod_{k=1}^{K_T^{(n)}}L_{\theta}(E_k, (s^{(n)}_{T_k}, i^{(n)}_{T_k},r^{(n)}_{T_k}(da))),
\end{align}}where:
 \begin{equation}L_{\theta}(E,(s, i,r(da))) =  \lambda_0^{\mathbf{1}_{\{E=0\}}} \,
  (\mu_0 s)^{\mathbf{1}_{\{E=1\}}}\,
 \lambda_1(s,i)^{\mathbf{1}_{\{E=2\}}}\,
 (\lambda_2(\theta) i)^{\mathbf{1}_{\{E=3\}}}\,
\lambda_3(i,\langle r, \psi \rangle,\theta)^{\mathbf{1}_{\{E=4\}}}
 (\mu_1 i)^{\mathbf{1}_{\{E=5\}}}.\nonumber
 \end{equation}
If $\theta^*\in \Theta$ denotes the 'true value' of the parameter, by taking the logarithm, keeping the terms depending on $\theta$ solely and using the representation of Definition \ref{definitionmicroinit}, one is lead to maximize the \textit{log-likelihood}:
\begin{align}
l^{(n)}_T(\theta)= &\int_{t=0}^T\int_{u=0}^{\infty}\left[\log(\lambda_2(\theta) i^{(n)}_{t-})\mathbf{1}_{\{0\leq u\leq \lambda_2(\theta^*)ni^{(n)}_{t-}\}}\right.\nonumber\\
+ & \left. \log(\lambda_3 (i^{(n)}_{t-}, \langle r^{(n)}_{t-},\psi\rangle,\theta)\mathbf{1}_{\{\lambda_2(\theta^*) ni^{(n)}_{t-}<u\leq \lambda_2(\theta^*) ni^{(n)}_{t-}+n\lambda_3 (i^{(n)}_{t-},\langle r^{(n)}_{t-},\psi\rangle,\theta^* )\}}\right]Q^I(dt,du)\nonumber\\
-&n\int_{t=0}^T \{\lambda_2(\theta) i^{(n)}_t+\lambda_3 ( i^{(n)}_t,\langle r^{(n)}_t,\psi\rangle,\theta)\}dt. \label{loglik}
\end{align}

\subsection{MLE consistency}Consider the ML estimator for $T>0$ and $n\in \N^*$:
\begin{equation} \label{MLdef}
\hat{\theta}_n=\arg \max_{\theta \in \Theta}l^{(n)}_T(\theta).
\end{equation}
\noindent The following assumptions shall be required:\\

\noindent {\bf Identifiability assumption H2:} The map $\theta \in \Theta \mapsto (\lambda_2(\theta),\;\lambda_3(.,.,\theta))$ is injective.\\

\noindent {\bf Regularity assumption R1:} For all $(x,y)\in \R_+^{*2}$, the maps $\theta\in \Theta \mapsto \lambda_2(\theta)$ and $\theta\in \Theta \mapsto \lambda_3(x,y,\theta)$, are equicontinuous.\\
\par As shown by the result below, under the basic identifiability and regularity conditions stipulated above, ML estimators are consistent.
\begin{theorem} {\sc (Consistency of ML Estimators)} \label{consistency}  Set $\Phi(x)=\log(x)+1/x-1$. Under Assumptions \textbf{H1}, $M_p$ with $p>2$, \textbf{H2} and \textbf{R1}, for all $T>0$ and any $(\theta^*,\theta)\in \Theta^2$, as $n\rightarrow \infty$, we have the following convergence in $\mathbb{P}_{\theta^*}$-probability,
\begin{align}
&  K_n(\theta,\theta^*)=  \frac{1}{n}\{l^{(n)}_T(\theta^*)-l^{(n)}_T(\theta)\}\rightarrow K(\theta,\theta^*),\label{contrast}\\
& \mbox{where: } K(\theta,\theta^*)=\int_{t=0}^T\lambda_2(\theta^*)i^*_t\Phi(\frac{\lambda_2(\theta^*)}{\lambda_2(\theta)})dt
+\int_{t=0}^T\lambda_3(i^*_t,\langle r^*_t,\phi \rangle,\theta^*)\Phi(\frac{\lambda_3(i^*_t,\langle r^*_t,\phi \rangle,\theta^*)}{\lambda_3(i^*_t,\langle r^*_t,\phi \rangle,\theta)})dt,\nonumber
\end{align}
denoting by $(s^*,i^*,r^*(da))$ the solution of the PDE system (\ref{pdeclassique3}) with rate functions associated with $\theta^*$.\\
Under the further assumption that the parameter space $\Theta$ is compact, the ML estimator (\ref{MLdef}) is consistent:
\begin{equation} \label{consist}
\lim_{n\rightarrow \infty}\hat{\theta}_n= \theta^*, \mbox{ in } \mathbb{P}_{\theta^*}- \mbox{probability}.
\end{equation}
\end{theorem}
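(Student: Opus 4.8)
The plan is to treat $K_n(\cdot,\theta^*)$ as a normalized contrast function and argue along the classical route for $M$-estimators: first establish the pointwise convergence (\ref{contrast}), then identify $\theta^*$ as the unique minimizer of the limit $K(\cdot,\theta^*)$, and finally upgrade pointwise to uniform convergence over the compact set $\Theta$ so that the location of the minimizer passes to the limit. I would note at the outset that, since $\hat\theta_n$ maximizes $l^{(n)}_T$, it minimizes $\theta\mapsto K_n(\theta,\theta^*)=\tfrac1n\{l^{(n)}_T(\theta^*)-l^{(n)}_T(\theta)\}$, and that $K_n(\theta^*,\theta^*)=0$.

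For the convergence (\ref{contrast}) I would start from (\ref{loglik}) and split $l^{(n)}_T(\theta)$ into its jump part (the double integral against $Q^I$) and its drift part. In the difference $l^{(n)}_T(\theta^*)-l^{(n)}_T(\theta)$ the factor $i^{(n)}_{t-}$ inside the first logarithm cancels, leaving $\log(\lambda_2(\theta^*)/\lambda_2(\theta))$ and $\log(\lambda_3(\cdots,\theta^*)/\lambda_3(\cdots,\theta))$ weighting the two indicator regions. Writing $Q^I=\tilde Q^I+dt\,du$ with $\tilde Q^I$ the compensated measure, the jump difference splits into a martingale $\tilde M^{(n)}$ and its compensator. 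After division by $n$, the compensator contributes
\[
\int_0^T\!\Big[\log\tfrac{\lambda_2(\theta^*)}{\lambda_2(\theta)}\,\lambda_2(\theta^*)\,i^{(n)}_t+\log\tfrac{\lambda_3(i^{(n)}_t,\langle r^{(n)}_t,\psi\rangle,\theta^*)}{\lambda_3(i^{(n)}_t,\langle r^{(n)}_t,\psi\rangle,\theta)}\,\lambda_3(i^{(n)}_t,\langle r^{(n)}_t,\psi\rangle,\theta^*)\Big]\,dt,
\]
while the drift difference contributes $-\int_0^T\{(\lambda_2(\theta^*)-\lambda_2(\theta))i^{(n)}_t+(\lambda_3(\cdots,\theta^*)-\lambda_3(\cdots,\theta))\}\,dt$. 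By Theorem \ref{LLN} one has $i^{(n)}_t\to i^*_t$ and $\langle r^{(n)}_t,\psi\rangle\to\langle r^*_t,\psi\rangle$, so, using the continuity of $\lambda_2,\lambda_3$ and of $\log$ together with dominated convergence on $[0,T]$, both integrals pass to their deterministic limits along the trajectory of (\ref{pdeclassique3}). The algebraic identity $a\log(a/b)-a+b=a\,\Phi(a/b)$, applied with $(a,b)=(\lambda_2(\theta^*),\lambda_2(\theta))$ and with $(a,b)=(\lambda_3(\cdots,\theta^*),\lambda_3(\cdots,\theta))$, reassembles the two limiting integrals into exactly $K(\theta,\theta^*)$.

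It then remains to dispose of the martingale remainder $\tfrac1n\tilde M^{(n)}_T$. Its predictable bracket is $\tfrac1{n^2}$ times an integral of the squared log-ratios against the true intensities $\lambda_2(\theta^*)n\,i^{(n)}_t+n\lambda_3(\cdots,\theta^*)$, hence of order $1/n$; by Doob's inequality it converges to zero in $L^2$, and thus in $\mathbb{P}_{\theta^*}$-probability. For the consistency, I would first record that $\Phi(x)=\log x+1/x-1\geq0$ with equality only at $x=1$ (since $\Phi'(x)=(x-1)/x^2$), so $K(\theta,\theta^*)\geq0$, and $K(\theta,\theta^*)=0$ forces $\lambda_2(\theta)=\lambda_2(\theta^*)$ and $\lambda_3(i^*_t,\langle r^*_t,\psi\rangle,\theta)=\lambda_3(i^*_t,\langle r^*_t,\psi\rangle,\theta^*)$ for almost every $t\in[0,T]$; the identifiability assumption \textbf{H2} then yields $\theta=\theta^*$, so $\theta^*$ is the unique minimizer of $K(\cdot,\theta^*)$. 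Using the equicontinuity of \textbf{R1} together with the compactness of $\Theta$, I would strengthen the pointwise convergence (\ref{contrast}) to $\sup_{\theta\in\Theta}|K_n(\theta,\theta^*)-K(\theta,\theta^*)|\to0$ in probability. A standard argmin argument then closes the proof: for any neighborhood $V$ of $\theta^*$ one has $\delta:=\inf_{\theta\notin V}K(\theta,\theta^*)>0$ by continuity on the compact $\Theta\setminus V$, whereas $K_n(\hat\theta_n,\theta^*)\leq K_n(\theta^*,\theta^*)=0$; on the event $\{\sup_\theta|K_n-K|<\delta\}$, whose probability tends to one, $\hat\theta_n$ cannot lie outside $V$, giving $\hat\theta_n\to\theta^*$ in $\mathbb{P}_{\theta^*}$-probability.

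The hard part will be the rigorous control of the logarithmic integrands. The ratios $\log(\lambda_2(\theta^*)/\lambda_2(\theta))$, $\log(\lambda_3(\cdots,\theta^*)/\lambda_3(\cdots,\theta))$ and the attendant quadratic-variation bounds require the detection rates, and hence $i^*_t$ and $\langle r^*_t,\psi\rangle$, to stay bounded away from zero and from infinity along $[0,T]$; this positivity and boundedness of the limiting path, together with the moment propagation ensured by $M_p$ and \textbf{H1}, is precisely what legitimates the dominated-convergence and Doob steps above. A secondary delicate point is the passage from $K(\theta,\theta^*)=0$ to $\theta=\theta^*$: this presupposes that the deterministic path $\{(i^*_t,\langle r^*_t,\psi\rangle):t\in[0,T]\}$ explores a set of states rich enough for \textbf{H2} to be effective, a nondegeneracy that is implicitly secured by the condition $(s_0,i_0)\in\R_+^{*2}$ on the initial value.
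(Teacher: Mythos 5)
Your proposal is correct and follows essentially the same route as the paper: the paper's own proof (Appendix A4) is a three-line sketch that derives the convergence (\ref{contrast}) from Theorem \ref{LLN} plus the smoothness of the rates, invokes \textbf{H2} for uniqueness of the minimizer, and closes with the standard argmin argument under \textbf{R1} (citing Ibragimov); your semimartingale decomposition of the log-likelihood, the identity $a\log(a/b)-a+b=a\,\Phi(a/b)$, the vanishing martingale bracket, and the uniform-convergence/argmin step are exactly the details that sketch leaves implicit. Your two flagged caveats (positivity of the rates along the limiting path, and the need for the path $(i^*_t,\langle r^*_t,\psi\rangle)$ to be rich enough for \textbf{H2} to identify $\theta^*$) are genuine subtleties that the paper itself passes over silently, so raising them is a point in your favor rather than a defect.
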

This result mainly relies on the Law of Large Numbers stated in Theorem \ref{LLN} (see \textbf{A4} in the Appendix for technical details).

\subsection{MLE asymptotic normality}
In order to refine our study of the asymptotic behavior of the ML estimator, we suppose that the stronger regularity assumption below is satisfied.\\

\noindent {\bf Regularity condition R2:} For all $(x,y)\in \R_+^{*2}$, the maps $\theta \in \Theta\mapsto \lambda_2(\theta)$ and $\theta \in \Theta\mapsto \lambda_3(x,y,\theta)$ are twice continuously differentiable.\\

Let $\mathcal{H}_{\theta}g$ denote the hessian matrix of any twice differentiable function $\theta \in \Theta\mapsto g(\theta)$. Observe that the \textit{Fisher information matrix} is given by:
\begin{multline}
\mathcal{I}_{\theta}=-\int_{u=0}^T\left\{\mathcal{H}_{\theta}\lambda_2(\theta) i^*_u\left(\frac{\lambda_2(\theta^*)}{\lambda_2(\theta)}-1\right) - \nabla_\theta \lambda_2(\theta)\cdot ^t\nabla_\theta \lambda_2(\theta)\frac{\lambda_2(\theta^*)i^*_u}{\lambda_2(\theta)^2 } \right.\\
+ \mathcal{H}_{\theta}\lambda_3 ( i^{*}_u,\langle r^{*}_u,\psi\rangle,\theta)\left(\frac{\lambda_3(i^*_u,\langle r^*_u,\psi\rangle,\theta^*)}{\lambda_3(i^*_u,\langle r^*_u,\psi\rangle,\theta)}-1\right) \\
- \left.\nabla_\theta \lambda_3(i^*_u,\langle r^*_u,\psi\rangle,\theta)\cdot ^t\nabla_\theta \lambda_3(i^*_u,\langle r^*_u,\psi\rangle,\theta)\frac{\lambda_3(i^*_u,\langle r^*_u,\psi\rangle , \theta^*)}{\lambda_3(i^*_u,\langle r^*_u,\psi\rangle , \theta)^2} \right\}du.\label{infofisher}
\end{multline}
The next limit result then follows from Theorem \ref{theoremecentrallimiteenonce} (see Appendix \textbf{A5}).
\begin{theorem} {\sc (Asymptotic Normality of ML Estimators)} \label{asymptnorm} Suppose that the assumptions of Theorem \ref{consistency} are fulfilled with the additional condition \textbf{R2}. Then we have the following convergence in distribution under $\mathbb{P}_{\theta^*}$:
\begin{equation}
\sqrt{n}\nabla_{\theta}l_T^{(n)}(\theta^*)\Rightarrow \mathcal{N}(0,\mathcal{I}_{\theta^*}), \mbox{ as } n\rightarrow \infty,
\end{equation}
where $\mathcal{I}_{\theta^*}$ is given by (\ref{infofisher}).
\par Moreover, if $\mathcal{I}_{\theta^*}$ is invertible, then the ML estimator (\ref{MLdef}) is asymptotically normal: under $\mathbb{P}_{\theta^*}$, we have the convergence in distribution
\begin{equation}
\sqrt{n}(\hat{\theta}_n-\theta^*)\Rightarrow \mathcal{N}(0,\mathcal{I}_{\theta^*}^{-1}), \mbox{ as }n\rightarrow \infty.
\end{equation}
\end{theorem}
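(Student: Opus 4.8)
The plan follows the two classical steps of likelihood asymptotics. First I would establish a central limit theorem for the rescaled score $n^{-1/2}\nabla_\theta l^{(n)}_T(\theta^*)$ by exhibiting the score at the true value as a martingale, and then transfer this to $\hat\theta_n$ through a Taylor expansion of the likelihood equation. Differentiating \eqref{loglik} in $\theta$ and evaluating at $\theta^*$, the $Q^I$-part contributes the logarithmic derivatives $\nabla_\theta\lambda_2(\theta^*)/\lambda_2(\theta^*)$ and $\nabla_\theta\lambda_3(\cdots,\theta^*)/\lambda_3(\cdots,\theta^*)$ integrated against $Q^I(dt,du)$, while the last deterministic integral contributes $-n\int_0^T(\nabla_\theta\lambda_2(\theta^*)i^{(n)}_t+\nabla_\theta\lambda_3(\cdots,\theta^*))\,dt$. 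The key observation is that the $u$-widths of the two indicators are precisely $\lambda_2(\theta^*)n\,i^{(n)}_{t-}$ and $n\lambda_3(\cdots,\theta^*)$, so the $dt\,du$-compensator of the $Q^I$-integral equals this deterministic term exactly; at $\theta^*$ the score therefore collapses to a pure stochastic integral against the compensated measure $\tilde Q^I=Q^I-dt\,du$, i.e. a centered square-integrable $(\mathcal F^{(n)}_t)$-martingale.

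Next I would compute its $\mathbb R^{d\times d}$-valued predictable quadratic variation. Because the two indicators have disjoint $u$-supports the cross contributions vanish, and integrating in $u$ gives, at $\theta^*$, $\langle\nabla_\theta l^{(n)}(\theta^*)\rangle_T = n\int_0^T\{\nabla_\theta\lambda_2\,{}^t\nabla_\theta\lambda_2\,i^{(n)}_t/\lambda_2+\nabla_\theta\lambda_3\,{}^t\nabla_\theta\lambda_3/\lambda_3\}\,dt$. Dividing by $n$ and applying the Law of Large Numbers (Theorem \ref{LLN}) to substitute the deterministic $(i^*,r^*)$ for $(i^{(n)},r^{(n)})$, this converges in probability to the Fisher matrix \eqref{infofisher} at $\theta^*$ — the Hessian terms appearing there drop out since $\lambda_k(\theta^*)/\lambda_k(\theta^*)-1=0$. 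A martingale central limit theorem (see \cite{joffemetivier}) then delivers $n^{-1/2}\nabla_\theta l^{(n)}_T(\theta^*)\Rightarrow\mathcal N(0,\mathcal I_{\theta^*})$; besides the bracket convergence, the only condition to verify is the Lindeberg/jump condition, which is immediate because each jump of the rescaled martingale has size $O(n^{-1/2})$ and hence vanishes uniformly.

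To pass from the score to the estimator I would use the first-order condition $\nabla_\theta l^{(n)}_T(\hat\theta_n)=0$ together with a coordinatewise mean-value expansion about $\theta^*$ (legitimate under \textbf{R2}), which yields $\sqrt n(\hat\theta_n-\theta^*)=-[\,n^{-1}\mathcal H_\theta l^{(n)}_T(\tilde\theta_n)\,]^{-1}\,n^{-1/2}\nabla_\theta l^{(n)}_T(\theta^*)$ for some $\tilde\theta_n$ on the segment joining $\theta^*$ and $\hat\theta_n$. Consistency (Theorem \ref{consistency}) gives $\tilde\theta_n\to\theta^*$ in $\mathbb P_{\theta^*}$-probability; combined with the invertibility of $\mathcal I_{\theta^*}$, Slutsky's lemma then produces the claimed limit $\mathcal N(0,\mathcal I_{\theta^*}^{-1})$ once one knows $n^{-1}\mathcal H_\theta l^{(n)}_T(\tilde\theta_n)\to-\mathcal I_{\theta^*}$. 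I expect this last convergence to be the genuine obstacle: evaluating the normalized Hessian at the random argument $\tilde\theta_n$ demands a \emph{uniform} (in $\theta$, on a neighborhood of $\theta^*$) law of large numbers rather than the pointwise statement of Theorem \ref{LLN}. This uniformity should be obtained by upgrading the pointwise LLN for the relevant functionals of $(i^{(n)},r^{(n)})$ with the equicontinuity supplied by \textbf{R2} and compactness of $\Theta$, after which the random-argument substitution follows by a routine continuity argument. The cancellation and bracket computations, by contrast, are essentially bookkeeping once the compensator identity is noticed.
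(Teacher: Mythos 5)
Your proposal is correct, and its overall skeleton (score CLT at $\theta^*$, convergence of the observed information, then the classical likelihood-equation argument) matches the paper's; but the key step is carried out by a genuinely different route. The paper's Appendix \textbf{A5} obtains the convergence in law of the normalized score process $n^{-1/2}\nabla_\theta l^{(n)}_t(\theta^*)$ as "a consequence of" the measure-valued fluctuation Theorem \ref{theoremecentrallimiteenonce}, whereas you prove it directly: you observe that at $\theta=\theta^*$ the deterministic term in (\ref{loglik}) is exactly the compensator of the $Q^I$-integrals, so the score is a stochastic integral against $\tilde Q^I=Q^I-dt\,du$, hence a centered square-integrable martingale; you then compute its matrix bracket (the disjointness of the $u$-supports killing the cross terms), use only the Law of Large Numbers (Theorem \ref{LLN}) to identify its limit with $\mathcal{I}_{\theta^*}$ from (\ref{infofisher}) --- correctly noting that the Hessian terms there vanish at $\theta^*$ --- and conclude by a standard martingale CLT with the $O(n^{-1/2})$ jump bound. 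This is arguably the cleaner path: the score weights screening and contact-tracing detections by different factors, while the process $(s^{(n)},i^{(n)},r^{(n)})$ and its fluctuation $\eta^{(n)}$ aggregate the two detection types, so deducing the score CLT from Theorem \ref{theoremecentrallimiteenonce} in fact requires precisely the kind of compensated-martingale computation you perform; your argument is self-contained, needs neither the heavy fluctuation theorem nor the $\Co^2$ assumption on $\psi$, and silently corrects the normalization ($n^{-1/2}$, not $\sqrt n$, consistent with the paper's own Lemma in \textbf{A5}). For the passage from score to estimator, the paper simply cites Chapter 4 of Linkov and omits all details; you make the mean-value expansion explicit and, rightly, flag that the substitution of the random point $\tilde\theta_n$ into the normalized Hessian requires a locally \emph{uniform} law of large numbers in $\theta$, obtainable from \textbf{R2} and compactness of $\Theta$ --- a gap the paper's write-up does not even acknowledge. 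What the paper's route buys is economy given machinery already established; what yours buys is a more elementary, honest derivation in which the martingale structure of the score is exhibited rather than asserted.
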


\subsection{MLE on first simple examples}\label{sectionobscomplete}
Let us consider the important situation where $\lambda_3(i,\langle r,\psi\rangle)$ has a multiplicative form. We will consider the three following models: $\forall i\in \R_+,\,\forall r\in \mathcal{M}_F(\R_+),$
\begin{eqnarray}
 \text{Model (A): }\;\;\lambda_3(i,\langle r,\psi \rangle)&= & \lambda_3 \langle r,\psi\rangle , \label{forme1}\\
  \text{Model (B): }\;\; \lambda_3(i,\langle r,\psi \rangle)&= & \lambda_3 \frac{\langle r,\psi\rangle i}{\langle r,\psi\rangle+i}, \label{forme2}\\
   \text{Model (C): }\;\; \lambda_3(i,\langle r,\psi \rangle)&= & \lambda_3 \langle r,\psi\rangle i. \label{forme3}
\end{eqnarray}
Here $\theta=(\lambda_2,\lambda_3)\in \Theta \subset \R_+^{*2}$, and the true parameter is denoted $\theta^*=(\lambda_2^*,\lambda_3^*)$. In this case, explicit maximum likelihood estimators can be obtained for $\lambda_2$ and $\lambda_3$.
\par We differentiate the log-likelihood with respect to $\lambda_2$ and $\lambda_3$ and obtain \textit{score processes} that we equate to zero. For Model (A) (see Eq. (\ref{forme1})), we have for $n\in \N^*$ and $t\in [0,T]$,
\begin{eqnarray*}
\left \{
\begin{array}{ll}
\frac{\partial l^{(n)}_t}{\partial \lambda_2}(\lambda_2,\lambda_3)= & \int_{v=0}^t\int_{u=0}^{\infty}\frac{1}{\lambda_2}\mathbf{1}_{0\leq u\leq \lambda_2^*n i^{(n)}_{v-}}Q^I(dv,du)
- n \int_{v=0}^t  i^{(n)}_v dv\\
\frac{\partial l^{(n)}_t}{\partial \lambda_3} (\lambda_2,\lambda_3)= & \int_{v=0}^t\int_{u=0}^{\infty}\frac{1}{\lambda_3}\mathbf{1}_{\lambda_2^* n i^{(n)}_{v-}<u\leq \lambda_2^* n i^{(n)}_{v-}+n \lambda_3^* \langle r^{(n)}_{v-},\psi \rangle }Q^I(dv,du)
-n\int_{v=0}^t \langle r^{(n)}_v,\psi\rangle dv,
\end{array}\right.
\end{eqnarray*}
and one gets:
\begin{eqnarray*}
\widehat{\lambda}^{(n)}_2 =  \frac{\int_{v=0}^T\int_{u=0}^{\infty}\mathbf{1}_{0\leq u\leq n\lambda_2^*  i^{(n)}_{v-}}Q^I(dv,du)}{n\int_{v=0}^T  i^{(n)}_v dv},\quad
\widehat{\lambda}_3^{(n,A)} =  \frac{\int_{v=0}^T\int_{u=0}^{\infty}\mathbf{1}_{\lambda_2^* n i^{(n)}_{v-}<u\leq \lambda_2^* n i^{(n)}_{v-}+n \lambda_3^* \langle r^{(n)}_{v-},\psi\rangle }Q^I(dv,du)}{n\int_0^T \langle r^{(n)}_v,\psi\rangle dv}.
\end{eqnarray*}
The Fisher information matrix may be easily explicited in this case:
\begin{eqnarray*}
\mathcal{I}_{\theta^*}&= &  \left(
\begin{array}{cc}
\int_0^T\frac{i^*_t}{\lambda_2^*}dt & 0\\
0 & \sigma^2_3
\end{array}
\right)\quad \mbox{ with }\quad \sigma^2_3=\frac{1}{\lambda_3^*}\int_0^T\langle r^*_t,\psi\rangle dt.
\end{eqnarray*}
\par Now in the case where the contact-tracing detection rate is of the form (\ref{forme2}) or (\ref{forme3}),  the MLE estimate $\widehat{\lambda}_2^{(n)}$ and its asymptotic variance remain both unchanged, while we get for Model (B):
\begin{align*}
\widehat{\lambda}_3^{(n,B)}= & \frac{\int_{v=0}^T\int_{u=0}^{\infty}\mathbf{1}_{\lambda_2^* n i^{(n)}_{v-}<u\leq \lambda_2^* n i^{(n)}_{v-}+n \lambda_3^* i^{(n)}_{v-}\langle r^{(n)}_{v-},\psi\rangle/(i^{(n)}_{v-}+\langle r^{(n)}_{v-},\psi\rangle) }Q^I(dv,du)}{n\int_0^T \frac{i^{(n)}_{v}\langle r^{(n)}_{v},\psi\rangle}{(i^{(n)}_{v}+\langle r^{(n)}_{v},\psi\rangle)} dv}\\
\sigma^2_3= & \frac{1}{\lambda_3^*}\int_0^T \frac{i^*_t \langle r^*_t,\psi\rangle}{(i_t^*+\langle r^*_t,\psi\rangle)}dt,
\end{align*}and for Model (C):
\begin{align*}
\widehat{\lambda}_3^{(n,C)}= & \frac{\int_{v=0}^T\int_{u=0}^{\infty}\mathbf{1}_{\lambda_2^* n i^{(n)}_{v-}<u\leq \lambda_2^* n i^{(n)}_{v-}+n \lambda_3^* i^{(n)}_{v-}\langle r^{(n)}_{v-},\psi\rangle}Q^I(dv,du)}{n\int_0^T i^{(n)}_{v}\langle r^{(n)}_{v},\psi\rangle dv},\quad
\sigma^2_3= \frac{1}{\lambda_3^*}\int_0^T i^*_t\langle r^*_t,\psi\rangle dt.
\end{align*}

\section{Application to HIV data related to the Cuban epidemic (1986-96)} \label{HIV}

This section is devoted to briefly present and discuss preliminary numerical results derived from the application of the statistical modelling previously described to real data. These data are related to the HIV epidemic in Cuba over the period 1986-1996. Our aim is to illustrate the practical interest of the theoretical notions considered in this paper. Owing to space limitations, the statistical issues of validating the parametric model stipulated below for such data shall be thoroughly investigated in a forthcoming paper, entirely dedicated to model checking, estimation and testing. \\

\noindent {\bf The Cuban HIV epidemic.} In prospect of management and analysis of the epidemic, information related to the spread of HIV/AIDS in Cuba has started to be collected and gathered (in a now massive and well-documented data repository) since 1986, after the first HIV cases were detected (see the site of the World Health Organization \cite{worldhealthorg} and refer to \cite{Auvert} for a detailed description of the HIV/AIDS epidemic in Cuba). Each time a person is detected as seropositive (HIV+), the following information is reported: date and way of detection, age, gender, area of residence, gender of sexual partners in the last two years. Furthermore, from the beginning of 1986, all detected persons are invited to give names and contact details of their recent sexual partners. Theses partners are then traced and a recommendation for HIV testing is made over a period of one year after the last sexual contact with the HIV-infected person (see \cite{Hsieh} for further
  details on the Cuban management system of the AIDS epidemic). Based on the information provided, through the interview following HIV detection in the one hand and through the contact-tracing system in the other hand, a plausible date of infection for each individual is reported in the database by the Health authorities (here, as a first go, we shall use these approximate dates as if they were all exact). After being detected, the person receives, either in the sanatoria or in the ambulatory system, regular counselling on living with HIV in order to prevent the risk of transmitting the retrovirus (defending in this respect the assumption that, once detected, a HIV carrier does not belong to the $I$ population any more). Besides, it is essential to notice that one may assert that HIV spreads in Cuba by means of sexual transmission quasi-solely, due to certain distinctive sociological features (see \cite{Auvert}): indeed, since the epidemic began, injection drug use and blood
 transfusion accounted for a negligible number of infections, corresponding to very isolated cases.
\\

\noindent {\bf ML estimation.}
A first attempt is now made to fit the three simple parametric SIR models with contact-tracing described in Section \ref{sectionobscomplete} with the weight function $\psi(a)=\exp(-ca)$ (the age $a$ being given in days and the constant $c$ being a parameter that we will choose (see (\ref{exemplepsi}))).
From the available data, the trajectory $\{(s^{(n)}_t,i^{(n)}_t,r^{(n)}_t(da))\}_{t\in [0,T]}$ may be reconstructed over the 2400 first days after the first detection (more than 6.5 years). Using the results of Section 4.4, we obtain the numerical results of Table \ref{tableresultats}. We have chosen $c=10^{-3}$, $c=10^{-2}$ and $c=3\,10^{-4}$, which correspond to various lifelengths of the information given by a detected patient to trace her/his infectious non detected partners.

\begin{table}[ht]
\centering
\begin{tabular}{|ccccc|}
\hline
model & parameter & estimated value &  asymptotic std & log-likelihood\\
\hline
  & $\lambda_2$ & $9.57\,10^{-4}$ & $4.34\,10^{-5}$ & \\
  \hline
\multicolumn{5}{|c|}{ $c=10^{-2}$ }\\
\hline
 (A) & $\lambda_3$ & $3.90\,10^{-3}$ & $ 2.30\,10^{-4}$ & -2115\\
  (B) & $\lambda_3$ & $4.50\,10^{-3}$ & $ 2.67\,10^{-4}$ & -2119\\
   (C) & $\lambda_3$ & $1.85\,10^{-5}$ & $1.09\,10^{-6}$ & -2117 \\
\hline
\multicolumn{5}{|c|}{ $c=10^{-3}$ }\\
\hline
 (A) & $\lambda_3$ & $6.56\,10^{-4}$ & $ 3.87\,10^{-5}$ & -2138\\
  (B) & $\lambda_3$ & $1.30\,10^{-3}$ & $ 7.72\,10^{-5}$ & -2143\\
   (C) & $\lambda_3$ & $3.11\,10^{-6}$ & $1.83\,10^{-7}$ & -2140 \\
\hline
\multicolumn{5}{|c|}{ $c=3\,10^{-4}$ }\\
\hline
 (A) & $\lambda_3$ & $4.37\,10^{-4}$ & $ 2.58\,10^{-5}$ & -2144\\
  (B) & $\lambda_3$ & $1.10\,10^{-3}$ & $ 6.54\,10^{-5}$ & -2146\\
   (C) & $\lambda_3$ & $2.07\,10^{-6}$ & $1.22\,10^{-7}$ & -2147 \\
   \hline
\end{tabular}\vspace{0.5cm}
\caption{\textit{Estimated parameters and asymptotic standard deviations.}}\label{tableresultats}
\end{table}
Figure \ref{figtauxinstantanes} shows the instantaneous detection rates $t\mapsto \widehat{\lambda}_2^{(n)}i^{(n)}_t$ and $t\mapsto \widehat{\lambda}_3^{(n)}(i^{(n)}_t, \langle r^{(n)}_t,\psi\rangle)$ for the three models (A), (B) and (C).

\begin{figure}[ht]
\begin{center}
\begin{tabular}[!ht]{ccc}
(a) & (b) & (c)\\
\hspace{-0.5cm}\begin{minipage}[b]{.33\textwidth}\centering
\includegraphics[width=0.9\textwidth,height=0.20\textheight,angle=0,trim=0cm 0cm 0cm 0cm]{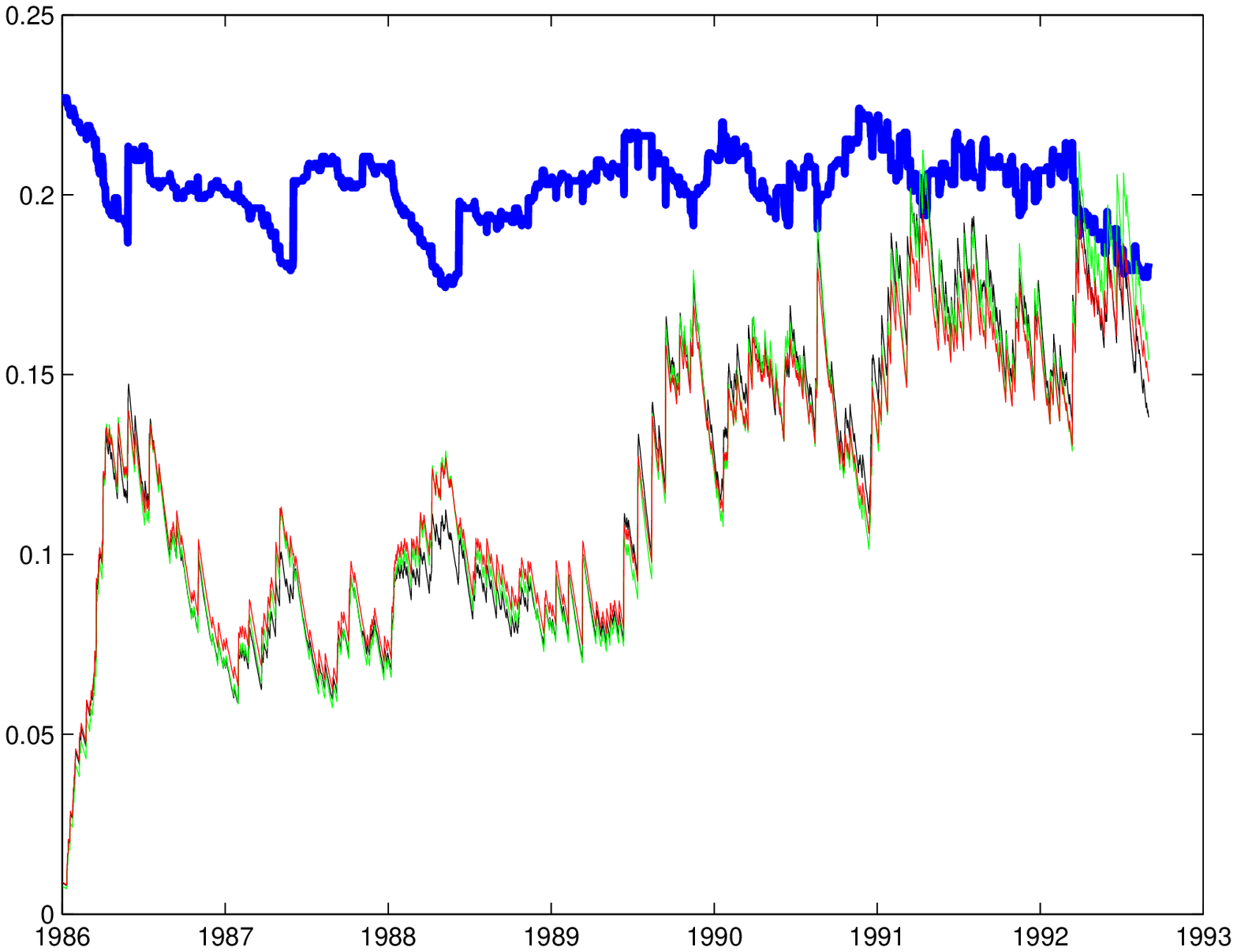}
\end{minipage} &
\begin{minipage}[b]{.33\textwidth}\centering
\includegraphics[width=0.9\textwidth,height=0.20\textheight,angle=0,trim=0cm 0cm 0cm 0cm]{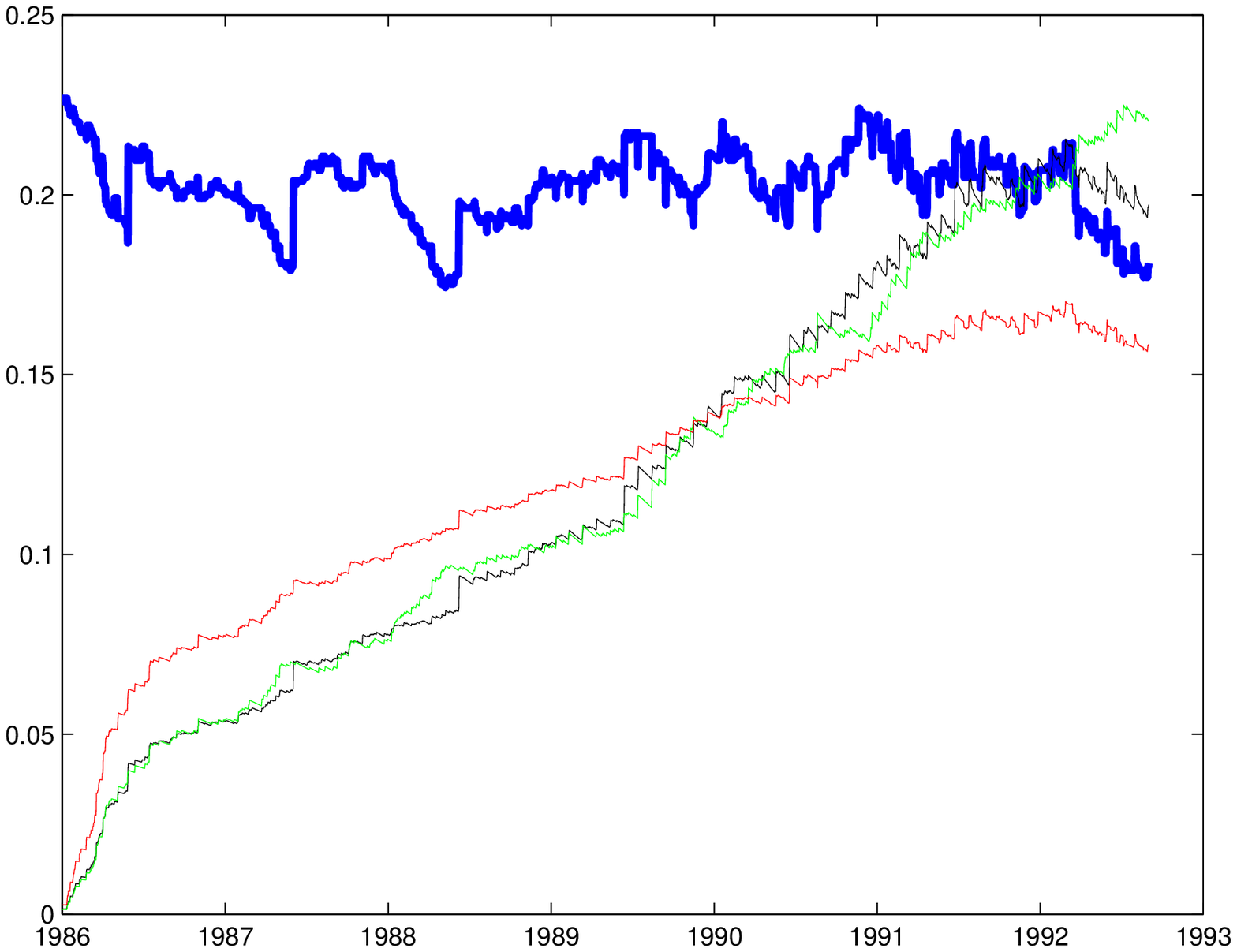}
\end{minipage} &
\begin{minipage}[b]{.33\textwidth}\centering
\includegraphics[width=0.9\textwidth,height=0.20\textheight,angle=0,trim=0cm 0cm 0cm 0cm]{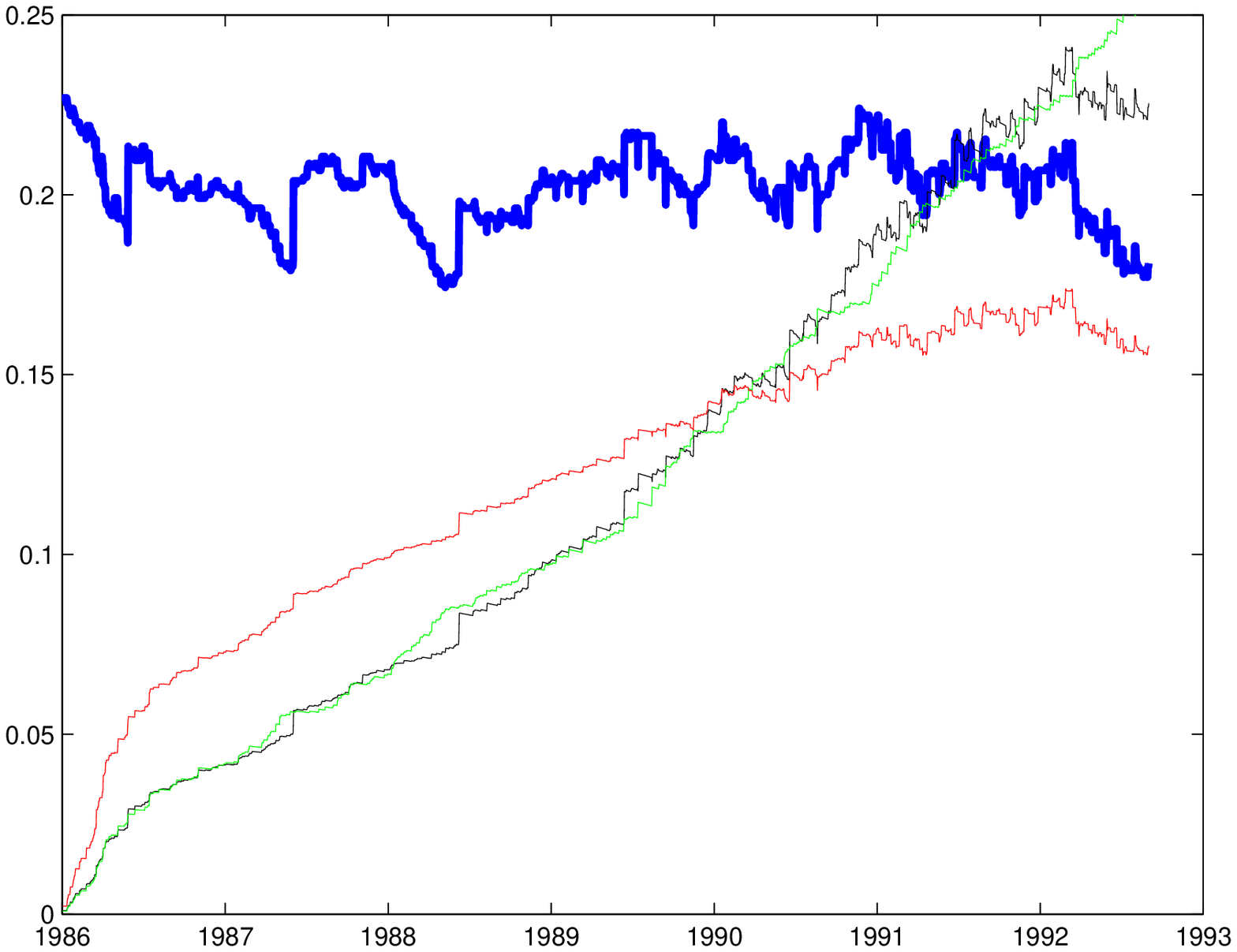}
\end{minipage}
\end{tabular}
\caption{\textit{Evolution of the instantaneous detection rates $t\mapsto \widehat{\lambda}_2^{(n)}i^{(n)}_t$ (in thick blue line) and $t\mapsto \widehat{\lambda}_3^{(n)}(i^{(n)}_t, \langle r^{(n)}_t,\psi\rangle)$ (in green (resp. red and black) line for Model (A) (resp. (B) and (C))). (a) : $c=10^{-2}$, (b) : $c=10^{-3}$, (c) : $c=3\,10^{-4}$}}\label{figtauxinstantanes}
\end{center}
\end{figure}

\par Recall that $\widehat{\lambda}_2$ and its asymptotic variance remain unchanged, whatever the model considered (A), (B) or (C). For the instantaneous rate of contact-tracing detection, we notice that the curves obtained by the models (A), (B) and (C) are very similar. This suggests that the model might be relatively robust to the choice of $\psi$ (provided it has the exponential parametric form stipulated here). The curves for $t\mapsto \widehat{\lambda}_3^{(n,A)}\langle r^{(n)}_t,\psi\rangle$ and $t\mapsto \widehat{\lambda}_3^{(n,C)}\langle r^{(n)}_t,\psi\rangle i^{(n)}_t$ are very close. This is due to the fact that on the considered period, the number of infectious individuals remains stable. Compared with Models (A) and (C), the estimated instantaneous detection rate in Model (B) is more important in the beginning and less important in the end. This is due to the nonlinearity introduced by the denominator in (\ref{forme2}) and to the fact that the number of detected individuals increases with time. The information given by a detected individual to trace new HIV+ ones has a smaller effect when the number of individuals is already high (certain "networks" have already been discovered then). We also underline that the larger $c$ is and the closer the curves for the contact-tracing detection are.
\begin{figure}[ht]
\begin{center}
\begin{tabular}[!ht]{cc}
(a) & (b) \\
\begin{minipage}[b]{.45\textwidth}\centering
\includegraphics[width=0.9\textwidth,height=0.22\textheight,angle=0,trim=0cm 0cm 0cm 0cm]{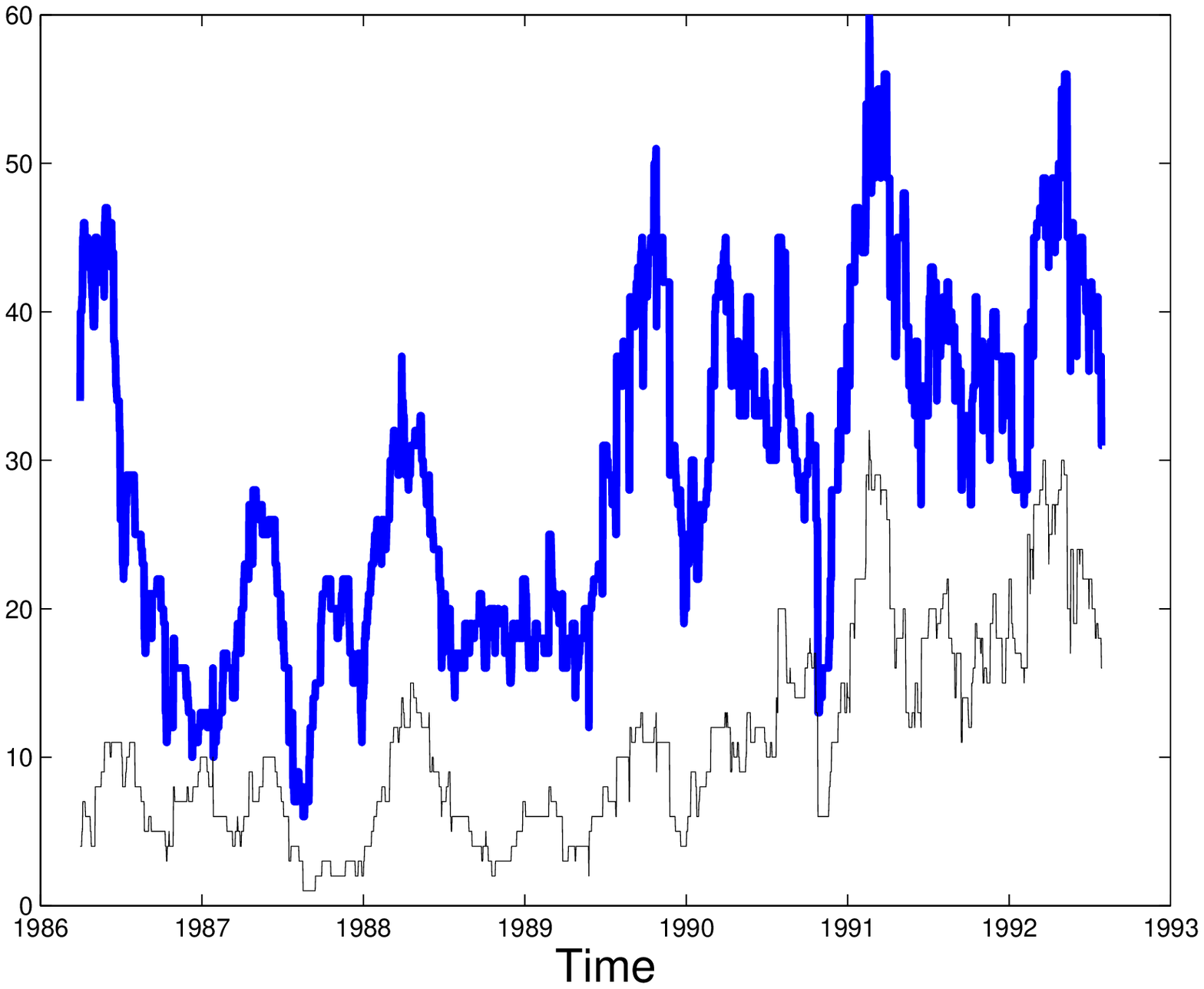}
\end{minipage} &
\begin{minipage}[b]{.45\textwidth}\centering
\includegraphics[width=0.9\textwidth,height=0.225\textheight,angle=0,trim=0cm 0cm 0cm 0cm]{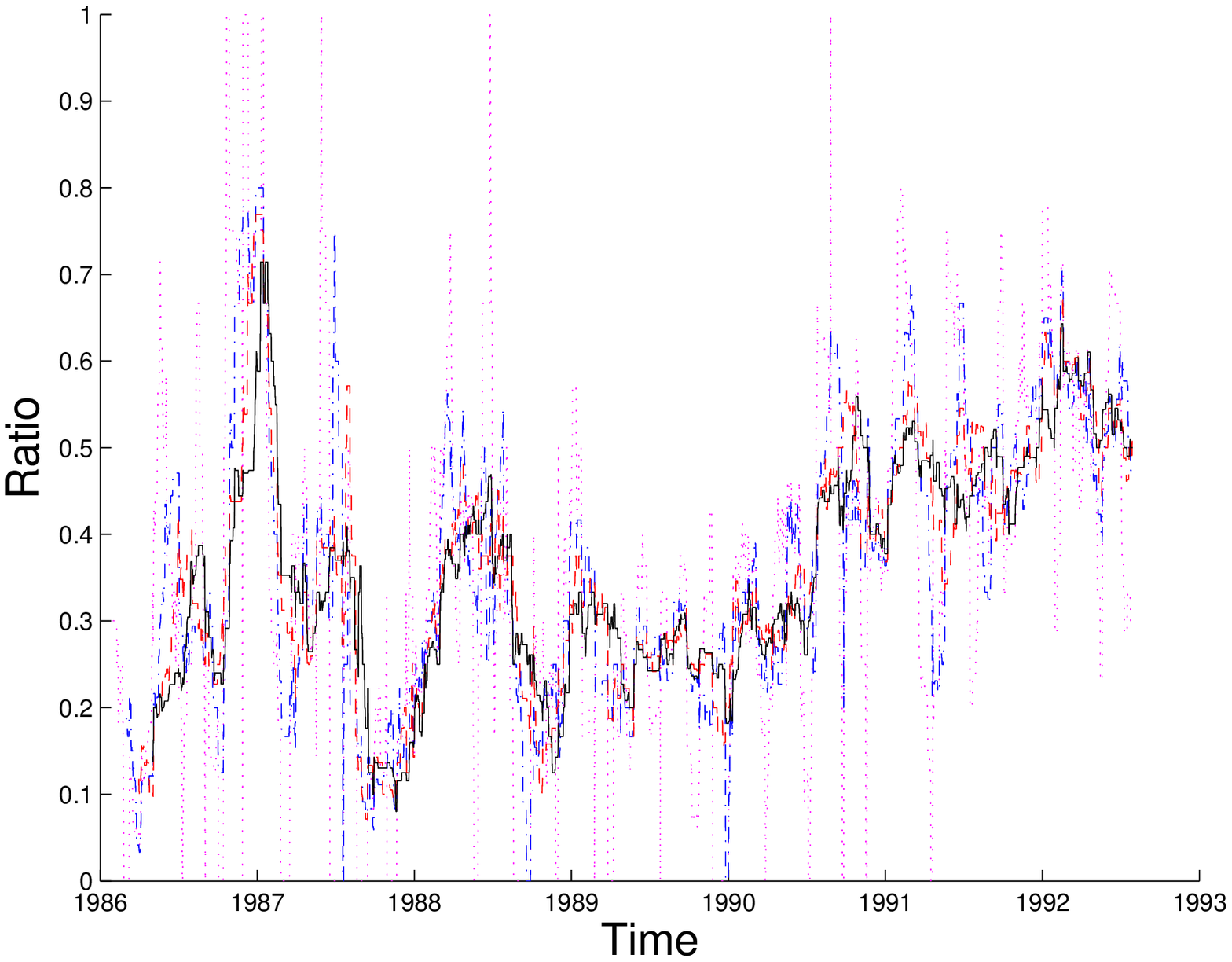}
\end{minipage}\end{tabular}
\caption{\textit{(a): 3-months Moving Average for the number of total detections (thin black line) and for the number of detection by contact-tracing (thick blue line). (b): Percentage of detections by contact-tracing obtained with the moving averages computed over 1 month (dotted magenta), 2 months (dash-dot blue), 3 months (dashed red) and 4 months (solid black).}}\label{figmoyennemob}
\end{center}
\end{figure}

\par On Fig. \ref{figtauxinstantanes}, it can be seen that the rate of detection by contact-tracing increases with time in each of the three models (\ref{forme1})-(\ref{forme3}). It seems that, in all cases, the weight of the contact-tracing detection increases to the point of almost counterbalancing the one of the alternative way of detection. This somehow corroborates the phenomenon underlined by Fig. \ref{figmoyennemob}, in which the proportion of individuals detected by contact-tracing among all detected individuals computed using a moving-window of fixed length is plotted. The graphs displayed in Fig. \ref{figmoyennemob} are model-free and show that the ratio of detections by contact-tracing among all detections stabilizes around 1/2, after having fluctuated during the 6 first years. These years correspond to the "burn-in period" needed for the contact-tracing detection system to start being really efficient.
\par Of course, a very large number of practical questions related to model fitting and interpretation/projection merit further investigation naturally arise, after this premier work. The impact of the choice of the weight function $\psi$ on the obtained results should be carefully investigated for instance. As mentioned above, this shall be the subject of further research, much more oriented towards numerical applications and the practical use of the model.

\section*{Acknowledgements} The authors are grateful to Dr. J. Perez of the National Institute of Tropical Diseases in Cuba for granting them access to the HIV/AIDS database. They acknowledge full support by the French Agency for Research under grant ACI-NIM no. 04-37. They also thank the referees for their judicious comments and helpful remarks, and Daniel Remenik for finding a mistake in a previous version of the paper.

\section*{Appendix - Technical proofs}
\subsection*{A1 Proof of Proposition \ref{ergodicity}} Observe first that, when the Markov epidemic process $(S,I,R(da))$ is restricted to the absorbing set $\mathbb{N}\times \{0\}\times \mathcal{M}_P(\R_+)$, the process $(S_t)_{t\geq 0}$ evolves as an immigration and death process with immigration rate $\lambda_0$ and death rate $\mu_0 S_t$. This process is classically geometrically positive recurrent. Hence, it only remains to show that the set $\mathbb{N} \times \{0\} \times \mathcal{M}_P(\R_+)$ is reached in finite time with probability one, no matter what the initial state. This may be straightforwardly established by coupling analysis: it actually suffices to consider a SIR process $(S',I',R'(da))$ with the same dynamics as $(S,I,R(da))$ except that, when $I=0$ one suppose that new infectives may be recruited from the outside at a strictly positive rate. Then, one may easily see that $(S',I')$ is an irreducible (non explosive) Markov process with state space $\mathbb{N}
 ^2$. Besides, denoting by $\Gamma$ its transition rate matrix, it is straightforward to check that the following Foster-Lyapounov's drift criterion is satisfied, with test function $f(m,l)=m+l$:
\begin{eqnarray*}
\sum_{(m',l')\in \mathbb{N}^2}\Gamma ((m,l),(m',l'))(f(m',l')-f(m,l))&\leq &\lambda_0-\mu_0m-(\mu_1+\lambda_2)l
\leq-cf(m,l)+d,
\end{eqnarray*}
with $c=\min(\mu_0,\mu_1+\lambda_2)$ and $d=\lambda_0$. By virtue of Theorem 7.1 in \cite{meyntweedie}, $(S',I')$ is geometrically recurrent. Thus, when starting from $(S_0,R_0)$, $(S',I')$ and $(S,I)$ reach $\mathbb{N}\times \{0\}$ in a finite time $\tau_{(S_0,R_0)}$ with finite exponential moment. \\

\noindent \textbf{Computation of $S_\infty$'s distribution.} Let us show that $S_\infty$ is a Poisson random variable of parameter $\lambda_0/\mu_0$. Set $p_k:=\mathbb{P}\left(S_\infty=k\right)$ for all $k \in \N$. We have
\begin{align}
\lambda_0 p_0+\mu_0 p_1=0,\quad \mbox{ and }\quad \forall k\geq 1,\, \lambda_0 p_{k-1}-\lambda_0 p_k+\mu_0 (k+1)p_{k+1}-\mu_0 k p_k=0.\label{loisinfty}
\end{align}We obtain from the first equation that $p_1=\lambda_0 p_0/\mu_0$. Assume that we have proved for $k\in \N$ that:
\begin{align}
\forall \ell \leq k,\, p_\ell=\frac{1}{\ell!}\left(\frac{\lambda_0}{\mu_0} \right)^\ell p_0.\label{hyprec}
\end{align}Let us prove that (\ref{hyprec}) holds for $k+1$. From (\ref{loisinfty}) we have:
\begin{align}
p_{k+1}=\frac{p_0}{\mu_0 (k+1)}\left(- \frac{\lambda_0 }{(k-1)!}\left(\frac{\lambda_0}{\mu_0}\right)^{k-1}+\frac{\lambda_0}{k!}\left(\frac{\lambda_0}{\mu_0}\right)^k+\frac{\mu_0 k }{k!}\left(\frac{\lambda_0}{\mu_0}\right)^k\right)=\frac{1}{(k+1)!}\left(\frac{\lambda_0}{\mu_0} \right)^{k+1} p_0.
\end{align}Since the sequence $\{p_k\}$ defines a probability measure, we have $\sum_{k=0}^{+\infty}p_k=1$. This entails that $p_0=e^{-\lambda_0/\mu_0}$ and the desired result then directly follows from the equation above.\\

\subsection*{A2 Proof of Theorem \ref{LLN} (Sketch of)}
This result may be derived from careful examination of Theorems 5.3's proof in \cite{fourniermeleard} or of Theorem 3.2.2's proof in \cite{chithese}. Let $\mathcal{M}_F(\R_+)$ be equipped with the vague convergence topology. Applying Aldous, Rebolledo and Roelly criteria (see \cite{aldous, joffemetivier, roelly}), the sequence $\{(S^{(n)},I^{(n)},R^{(n)})\}_{n\in \N^*}$ is proved tight. By Prohorov's Theorem, there hence exists a subsequence that converges in law to a limiting value $(S,I,R)\in \mathbb{D}(\R_+,\R_+^2\times \mathcal{M}_F(\R_+))$. This subsequence can be chosen such that $\langle R^{(n)},1\rangle$ converges in law to $\langle R,1\rangle$ in $\mathbb{D}(\R_+,\R_+)$. Since the process $\{(S^{(n)}_t,I^{(n)}_t,R^{(n)}_t)\}_{t\geq 0}$ has jumps of amplitude $1/n$, the limiting value is necessarily a continuous process. Using a criterion proposed in \cite{meleardroelly}, one may prove that a subsequence may again be extracted from the previous one, that converges in law to $(S,I,R)$ in $\mathbb{D}(\R_+,\R_+^2 \times \mathcal{M}_F(\R_+))$, where $\mathcal{M}_F(\R_+)$ is endowed with the weak convergence topology this time. The evolution equation is then identified thanks to the martingale representation provided in Proposition \ref{propmartingale}. The quadratic variation vanishes as $n\rightarrow +\infty$ and the moment assumption enables us to take the limit.

\subsection*{A3 Proof of Theorem \ref{theoremecentrallimiteenonce} (Sketch of)}

The proof of Theorem \ref{theoremecentrallimiteenonce} is an adaptation of the argument developed in \cite{metivierIHP}, \cite{meleardfluctuation} and Chapter 4 of \cite{chithese}. We refer the reader to these works for hints to a complete proof and give here the main steps only. Here and throughout, $C(T,N)$ shall denote a constant, depending on $T$ and $N$ only, that will not be necessarily the same at each appearance.\\

\par As previously emphasized, here and throughout the fluctuation process (\ref{deffluctuations}) shall be viewed as a distribution-valued process. We will deal with the spaces introduced in (\ref{plongement2}). The continuous injections which link these spaces are proved in Theorem 5.4 \cite{adams}. We consider (\ref{deffluctuations}) as taking its values in the space $C^{-2,2}$. As the image of $C^{2,2}$ through the differential operator $\partial_a$ is included in $C^{1,2}$, we will be lead to look for estimates in these both spaces. In order to work in a Hilbert setting, we
consider the continuous injections linking this space with $W_0^{-4,1}$ (in which the tightness criterion is proved) and with $W_0^{-1,3}$ (subspace of $C^{-1,2}$ and $C^{-2,2}$ in which the norm of the martingale part of $\eta^n$ is controlled). The continuous embedding $C^{-0,3}\hookrightarrow W_0^{-1,3}$ shall also be required in order to control the norm of certain operators of $W_0^{-1,3}$. The Hilbert-Schmidt embedding $W_0^{-3,2}\hookrightarrow W_0^{-4,1}$ is required by the tightness criteria that we use. Finally, the embedding $W_0^{-4,1}\hookrightarrow C^{-4,0}$ is used to obtain the uniqueness of the limiting value. \\

\noindent\textbf{Fluctuation process.} We have the following decomposition: $\forall f\,:\,(a,t)\mapsto f_t(a)$ in $\Co^{1}(\R_+^2),$

\begin{align}
& \eta^{(n)}_t(f)
 =  \sqrt{n}\left(
\begin{array}{c}
s^{(n)}_0-s_0\\
i^{(n)}_0-i_0\\
0
\end{array}
\right)+\widetilde{M}^{(n)}_t(f)+\widetilde{V}^{(n)}_t(f),\quad \mbox{ where: }\label{tcln}\\
& \eta^{(n)}_t(f)=\left(\begin{array}{c}
\eta^{s,(n)}_t\\
\eta^{i,(n)}_t\\
\langle \eta^{r,(n)}_t,f_t\rangle
\end{array}\right),\,\,\, \widetilde{M}^{(n)}_t(f)=\left(\begin{array}{c}
\widetilde{M}^{s,(n)}_t\\
\widetilde{M}^{i,(n)}_t\\
 \widetilde{M}^{r,(n)}_t(f_t) \end{array}\right)=\sqrt{n}\left(\begin{array}{c}
M^{s,(n)}_t\\
M^{i,(n)}_t\\
 M^{r,(n)}_t(f_t) \end{array}\right),\nonumber\\ 
 & \widetilde{V}^{(n)}_t(f)=
\left(
\begin{array}{c}
\widetilde{V}^{s,(n)}_t\\
\widetilde{V}^{i,(n)}_t\\
 \widetilde{V}^{r,(n)}_t(f_t)
\end{array}
\right)\nonumber
\end{align}
where $ M^{s,(n)},$ $M^{i,(n)}$ and $M^{r,(n)}_t(f_t)$ have been defined in (\ref{martingale}) and where:
\begin{eqnarray*}
\widetilde{V}^{s,(n)}_t&=& \int_{u=0}^t\left\{\mu_0\eta^{s,(n)}_u+\sqrt{n}(\lambda_1(s^{(n)}_u,i^{(n)}_u)-\lambda_1(s_u,i_u))\right\}du\\
\widetilde{V}^{i,(n)}_t&=& \int_{u=0}^t\left\{\sqrt{n}(\lambda_1(s^{(n)}_u,i^{(n)}_u)-\lambda_1(s_u,i_u))+(\mu_1+\lambda_2)\eta^{i,(n)}_u \right. \nonumber\\
&+&\left.\sqrt{n}(\lambda_3(i^{(n)}_u, \langle r^{(n)}_u,\psi\rangle)-\lambda_3(i_u, \langle r_u,\psi\rangle))\right\}du \\
\widetilde{V}^{r,(n)}_t(f_t)&=& \int_{u=0}^t\left\{f_u(0)[\lambda_2 \eta^{i,(n)}_u+\sqrt{n}(\lambda_3(i^{(n)}_u,\langle r^{(n)}_u,\psi\rangle )-\lambda_3(i_u,\langle r_u,\psi\rangle ))]\right. \nonumber\\
&+&\left. \langle \eta^{r,(n)}_u,\,\partial_af_u(a)+\partial_u f_u(a)\rangle \right\}du.
\end{eqnarray*}
{\bf Localization.} A difficulty arises from the fact that the size of the population is not \textit{a priori} bounded. In this respect, for any $N>0$, consider the stopping time:
\begin{equation}
\zeta^{(n)}_N=\inf\left\{t\geq \R_+,\, \max\left(s^{(n)}_t,\, i^{(n)}_t, \, \langle r^{(n)}_t,1\rangle ,\, \langle r^{(n)}_t,|a|\rangle \right)>N\right\}.\label{defzetaNn}
\end{equation}
One may easily see that for all $N>0$ such that
\begin{equation}
N>\max(\sup_{t\in [0,T]} s_t, \sup_{t\in [0,T]}i_t, \sup_{t\in [0,T]}\langle r_t,1\rangle,\sup_{t\in [0,T]}\langle r_t,|a|\rangle).\label{Nachoisir}\end{equation}
we have:
\begin{equation} \lim_{n\rightarrow+\infty}\mathbb{P}\left(\zeta_N^{(n)} \leq  T\right)
=  0\label{localisation}\end{equation}
\noindent\textbf{Moment estimates.} It is straightforward that
$\mathbb{E}[\sup_{t\in [0,T]}\|\eta^{r,(n)}_t\|^2_{W_0^{-1,3}}]\leq C(T, n)$, for all $n\in \N^*$ (see \cite{chithese} Lemma 4.3.1). Since $C(T,n)$ depends on $n$, this estimate is not very interesting. Its importance lies in setting $W_0^{-1,3}$ as a reference space. Using (\ref{plongement2}), we can then also consider it as a process with values in $C^{-1,2}$, $C^{-2,2}$, $W_0^{-3,2}$, $W_0^{-4,1}$ or $C^{-4,0}$. Estimates that do not depend on $n$ can be obtained by following the proofs of Lemmas 4.4.3, 4.4.4 and 4.4.5 in \cite{chithese}:
\begin{lemma}\label{lemmeproptcl}Let $N$ be fixed as in (\ref{Nachoisir}). Then,
\begin{align}
\lefteqn{\sup_{n\in \N^*}\mathbb{E}[\sup_{t\in [0,T\wedge \zeta_N^{(n)}]}\{|\eta^{s,(n)}_t|^2+|\eta^{i,(n)}_t|^2+\|\eta^{r,(n)}_t\|^2_{W_0^{-4,1}}\}]}\nonumber\\
\leq & \sup_{n\in \N^*}\mathbb{E}[\sup_{t\in [0,T\wedge \zeta_N^{(n)}]}\{|\eta^{s,(n)}_t|^2+|\eta^{i,(n)}_t|^2+\|\eta^{r,(n)}_t\|^2_{W_0^{-3,2}}\}]\nonumber\\
\leq & \sup_{n\in \N^*}\mathbb{E}[\sup_{t\in [0,T\wedge \zeta_N^{(n)}]}\{|\eta^{s,(n)}_t|^2+|\eta^{i,(n)}_t|^2+\|\eta^{r,(n)}_t\|^2_{C^{-1,2}}\}]
\leq C(N,T)<+\infty.\label{partie2tclbornebis}
\end{align}
\end{lemma}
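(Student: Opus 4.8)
I need a chain of three uniform-in-$n$ bounds culminating in
$$\sup_{n}\mathbb{E}\Big[\sup_{t\le T\wedge\zeta_N^{(n)}}\big\{|\eta^{s,(n)}_t|^2+|\eta^{i,(n)}_t|^2+\|\eta^{r,(n)}_t\|^2_{W_0^{-4,1}}\big\}\Big]\le C(N,T)<\infty,$$
with the intermediate spaces $W_0^{-3,2}$ and $C^{-1,2}$.

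Let me think about how I would prove this.

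The first two inequalities are just the continuous embeddings $C^{-1,2}\hookrightarrow W_0^{-3,2}\hookrightarrow W_0^{-4,1}$ from the chain (plongement2), so those are free — the norm only decreases. The real content is the final bound in the $C^{-1,2}$ norm, uniform in $n$, up to the stopping time $\zeta_N^{(n)}$.

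So I should focus my plan on the $C^{-1,2}$ estimate. The structure is: use the semimartingale decomposition (tcln), square, take sup and expectation, and apply Gronwall. Let me think about each piece.

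The decomposition is $\eta^{(n)}_t(f) = \eta^{(n)}_0(f) + \widetilde M^{(n)}_t(f) + \widetilde V^{(n)}_t(f)$.

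For the **initial term**: the assumption $\sup_n \mathbb{E}(|\eta^{s,(n)}_0|^2+|\eta^{i,(n)}_0|^2)<\infty$ controls it, and $\eta^{r,(n)}_0=0$.

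For the **martingale part** $\widetilde M^{(n)}$: use Doob's inequality. The quadratic variation is $n$ times the one in (crochetmn), i.e. $\langle \widetilde M^{r,(n)}(f)\rangle_t = \int_0^t f_u^2(0)(\lambda_2 i^{(n)}_u + \lambda_3(\dots))du$. On $[0,T\wedge\zeta_N^{(n)}]$ the rates are bounded by $N$-dependent constants (via H1's domination $\lambda_k\le\bar\lambda_k xx'$ and the bound $\langle r,1\rangle\le N$, $i\le N$). The key technical point: to get a $C^{-1,2}$ (or better $W_0^{-1,3}$) bound on the measure-valued martingale, I need to bound $\mathbb{E}\sup_t \|\widetilde M^{r,(n)}_t\|^2$ as an operator norm, which requires a Hilbert-space structure — that's exactly why $W_0^{-1,3}$ is introduced as the reference space where the martingale norm is controlled (the paper says so). So the route is: bound the martingale in $W_0^{-1,3}$ using an orthonormal basis / Parseval argument (Métivier–Méléard style), noting $f\mapsto f(0)$ is continuous on $W_0^{1,3}$, hence $\delta_0 \in W_0^{-1,3}$.

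For the **finite-variation part** $\widetilde V^{(n)}$: here the terms $\sqrt n(\lambda_1(s^{(n)},i^{(n)})-\lambda_1(s,i))$ must be linearized. Using H1 ($\lambda_1\in\mathcal C^1$ with locally Lipschitz, $N$-bounded derivatives) and a Taylor/mean-value argument, $\sqrt n(\lambda_1(s^{(n)}_u,i^{(n)}_u)-\lambda_1(s_u,i_u))$ is, up to a remainder, $\partial_S\lambda_1\,\eta^{s,(n)}_u+\partial_I\lambda_1\,\eta^{i,(n)}_u$, which is linear in $\eta^{(n)}_u$ with $N$-bounded coefficients. The remainder is $O(\sqrt n \cdot |(s^{(n)},i^{(n)})-(s,i)|^2)=O(|\eta^{(n)}|^2/\sqrt n)$, controlled by the jumps of size $1/n$ and the $W_0^{-1,3}$ a priori (possibly $n$-dependent) bound already in hand — this is where the auxiliary estimate $\mathbb{E}\sup\|\eta^{r,(n)}\|^2_{W_0^{-1,3}}\le C(T,n)$ gets used to kill remainders. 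The transport term $\langle\eta^{r,(n)}_u,\partial_a f_u\rangle$ is handled because $\partial_a$ maps $C^{2,2}\to C^{1,2}$ (stated in the functional preliminaries), keeping everything within the dual-space scale.

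**Plan of the proof I would write.** First I would reduce to the $C^{-1,2}$ bound by invoking the embeddings. Then, writing the decomposition (tcln) and using $(a+b+c)^2\le 3(a^2+b^2+c^2)$, I would handle the three parts separately: the initial condition by hypothesis; the martingale by Doob in the Hilbert space $W_0^{-1,3}$ with its quadratic-variation bound from (crochetmn) scaled by $n$ and localized by $N$; and the drift by linearizing the rate differences via H1 and bounding the operator $J^*_u=\partial_a$ using the embedding $C^{2,2}\to C^{1,2}$. Collecting the linear-in-$\eta$ drift contributions and applying Gronwall's lemma on $\mathbb{E}\sup_{s\le t\wedge\zeta_N^{(n)}}\|\eta^{(n)}_s\|^2_{C^{-1,2}}$ then closes the estimate with a constant depending only on $N,T$.

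**Main obstacle.** The delicate step is the uniform-in-$n$ control of the measure-valued martingale and of the quadratic remainder in the Taylor expansion \emph{simultaneously}: I must first secure the (non-uniform) $W_0^{-1,3}$ bound $C(T,n)$ to make sense of $\sqrt n$-scaled differences, then upgrade to a genuinely $n$-free bound by Gronwall. Getting the Hilbert-Schmidt embeddings right so that Doob's inequality applies to the distribution-valued martingale — i.e. verifying $\delta_0$ and the relevant operators indeed lie in the correct negative-index spaces — is the crux, and it is precisely where following Métivier's and Méléard's framework (and Lemmas 4.4.3–4.4.5 of \cite{chithese}) is indispensable; I would cite those rather than redo the functional-analytic bookkeeping.
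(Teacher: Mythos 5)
Your overall architecture matches the paper's: the first two inequalities are indeed just the continuous injections in (\ref{plongement2}), and the substantive $C^{-1,2}$ bound is obtained, as in the paper, from the decomposition (\ref{tcln}), control of the martingale bracket on $[0,T\wedge\zeta_N^{(n)}]$ via the localization and \textbf{H1}, and Gronwall's lemma (the paper compresses all of this into two sentences, deferring the bookkeeping to Lemmas 4.4.3--4.4.5 of \cite{chithese}).

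However, one step of your plan, as stated, would fail: the control of the quadratic Taylor remainder. You linearize $\sqrt n(\lambda_1(s^{(n)}_u,i^{(n)}_u)-\lambda_1(s_u,i_u))$ and propose to absorb the remainder $O(|\eta^{(n)}_u|^2/\sqrt n)$ using the auxiliary estimate $\mathbb{E}[\sup_t\|\eta^{r,(n)}_t\|^2_{W_0^{-1,3}}]\le C(T,n)$. Since $C(T,n)$ depends on $n$ and is not claimed to be $o(\sqrt n)$, the resulting contribution $C(T,n)/\sqrt n$ is not uniformly bounded in $n$, so it cannot close a Gronwall argument whose conclusion is a constant $C(N,T)$; in the paper that $n$-dependent estimate serves only to fix $W_0^{-1,3}$ as a reference space in which $\eta^{r,(n)}$ makes sense, not to kill remainders. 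The correct (and simpler) mechanism --- the one the paper means by ``the assumed properties of $\lambda_1$, $\lambda_3$ and $\psi$'' --- bypasses Taylor altogether: by the local Lipschitz property in \textbf{H1}, on $\{u\le \zeta_N^{(n)}\}$ with $N$ as in (\ref{Nachoisir}) (so that both the renormalized processes and their deterministic limits are bounded by $N$), one has directly
\begin{equation*}
\sqrt n\,\bigl|\lambda_1(s^{(n)}_u,i^{(n)}_u)-\lambda_1(s_u,i_u)\bigr|\le L_1(N)\bigl(|\eta^{s,(n)}_u|+|\eta^{i,(n)}_u|\bigr),
\end{equation*}
and similarly for $\lambda_3$ using $|\langle\eta^{r,(n)}_u,\psi\rangle|\le \|\eta^{r,(n)}_u\|_{C^{-1,2}}\|\psi\|_{C^{1,2}}$; the drift is therefore \emph{linear} in the fluctuation norms with constants depending only on $N$, which is exactly what Gronwall needs. (Alternatively, your Taylor route can be rescued by noting that on the localized event $|\eta^{s,(n)}_u|/\sqrt n=|s^{(n)}_u-s_u|\le 2N$, so the quadratic remainder is again linearly dominated by $N|\eta^{(n)}_u|$ --- but the Lipschitz bound makes this detour unnecessary.) With that repair, your Doob-in-$W_0^{-1,3}$ martingale estimate and the Gronwall step go through exactly as in the paper.
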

\begin{proof}The first two inequalities are consequences of the continuous injection (\ref{plongement2}). Then, (\ref{partie2tclbornebis}) follows from the assumed properties of $\lambda_1$, $\lambda_3$ and $\psi$ combined with the definition of $\|.\|_{C^{-1,2}}$ and the use of Gronwall's Lemma.
\end{proof}

\noindent\textbf{Tightness of the sequence $\{\mathcal{L}(\eta^{(n)})\}_{n\in \N^*}$.} By using a tightness criterion due to M\'etivier (see Section 2.1.5 and Theorem 2.3.2 in \cite{joffemetivier} and Lemma C in \cite{meleardfluctuation}), we will prove thanks to the preceding moment estimates that:
\begin{lemma}\label{proptensionannexe}
The sequence $\{\mathcal{L}(\eta^{(n)})\}_{n\in \N^*}$ of the laws of the fluctuation processes $\{\eta^{(n)}\}_{n\in \N^*}$, when considered as a sequence of $\mathbb{D}([0,T],\R^2\times W_0^{-4,1})$, is tight.
\end{lemma}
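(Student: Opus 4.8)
The plan is to apply the tightness criterion for Hilbert-space-valued c\`adl\`ag semimartingales due to M\'etivier (Theorem 2.3.2 in \cite{joffemetivier}, see also Lemma C in \cite{meleardfluctuation}). Writing the fluctuation process through its semimartingale decomposition (\ref{tcln}), $\eta^{(n)}=\eta^{(n)}_0+\widetilde M^{(n)}+\widetilde V^{(n)}$, the criterion reduces the proof to two ingredients: a \emph{compact containment} condition, guaranteeing that for each fixed $t$ the law of $\eta^{(n)}_t$ is tight in $W_0^{-4,1}$, and an \emph{Aldous--Rebolledo} condition controlling, uniformly over stopping times, the increments of the trace of the bracket of $\widetilde M^{(n)}$ and those of the finite-variation part $\widetilde V^{(n)}$. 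Since the population size is not a priori bounded, I would first work on the localized interval $[0,T\wedge \zeta_N^{(n)}]$ with $\zeta_N^{(n)}$ as in (\ref{defzetaNn}), prove tightness of the stopped processes, and then let $N\to\infty$, using (\ref{localisation}) to remove the localization.

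For compact containment, I would invoke the Hilbert--Schmidt embedding $W_0^{-3,2}\hookrightarrow_{H.S.} W_0^{-4,1}$ singled out in (\ref{plongement2}), whose unit ball is relatively compact in $W_0^{-4,1}$. Combined with the uniform bound $\sup_{n}\mathbb{E}[\sup_{t\le T\wedge\zeta_N^{(n)}}\|\eta^{r,(n)}_t\|^2_{W_0^{-3,2}}]\le C(N,T)$ from Lemma \ref{lemmeproptcl}, Markov's inequality then confines each marginal to a compact subset of $W_0^{-4,1}$ with probability arbitrarily close to one. The real-valued components $\eta^{s,(n)}_t$ and $\eta^{i,(n)}_t$ are handled by the same moment estimate together with the assumption $\sup_n\mathbb{E}(|\eta^{s,(n)}_0|^2+|\eta^{i,(n)}_0|^2)<+\infty$.

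The Aldous--Rebolledo step splits along the decomposition. For the martingale part $\widetilde M^{(n)}=\sqrt n\, M^{(n)}$, its bracket is $n$ times the brackets computed in Proposition \ref{propmartingale}, hence of order one; on the localized event all jump rates $\lambda_2 i^{(n)}_u$ and $\lambda_3(i^{(n)}_u,\langle r^{(n)}_u,\psi\rangle)$ are bounded by $C(N)$, so the increment of the trace of $\langle\widetilde M^{(n)}\rangle$ over an interval $[\tau,\tau+\theta]$ is bounded by $C(N,T)\,\theta$. Crucially, the finiteness of this trace rests on the fact that the jumps of $r^{(n)}$ occur in the direction $\delta_0$, whose $W_0^{-4,1}$-norm is finite because point evaluation is continuous on $W_0^{4,1}\hookrightarrow C^{0,4}$; this is exactly where the Hilbert--Schmidt structure is used. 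For the finite-variation part $\widetilde V^{(n)}$, I would linearize the terms $\sqrt n(\lambda_k(\cdot^{(n)}_u)-\lambda_k(\cdot_u))$ via the $\Co^1$ assumption \textbf{H1}, dominating them by $\partial\lambda_k\cdot(|\eta^{s,(n)}_u|+|\eta^{i,(n)}_u|+|\langle\eta^{r,(n)}_u,\psi\rangle|)$ plus negligible remainders, all controlled by Lemma \ref{lemmeproptcl}; the transport term $\langle\eta^{r,(n)}_u,\partial_a f_u\rangle$ is handled using the boundedness of $\partial_a:C^{2,2}\to C^{1,2}$. Consequently the $W_0^{-4,1}$-norm of $\widetilde V^{(n)}_{\tau+\theta}-\widetilde V^{(n)}_\tau$ is bounded by $C(N,T)\,\theta$ as well, which yields the Aldous condition for both pieces.

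The hard part is the aging/transport operator $J^*$ and the loss of one derivative it induces: this is what forces the entire ladder of embeddings (\ref{plongement2}) and obliges one to track, term by term, which norm in the scale controls which contribution---the martingale bracket being controlled in $W_0^{-1,3}$, the fluctuation itself in $C^{-1,2}$ and $C^{-2,2}$, tightness obtained in $W_0^{-4,1}$ through the Hilbert--Schmidt embedding, and uniqueness of the limit later read off in $C^{-4,0}$. Verifying that $\delta_0$ and the operator $J^*$ act boundedly across these spaces, and that the resulting constants are uniform in $n$ on the localized event, is the technical heart of the argument; once these bounds are in place, M\'etivier's criterion delivers tightness directly.
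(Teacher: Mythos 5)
Your proposal follows essentially the same route as the paper's proof: M\'etivier's criterion (Theorem 2.3.2 in \cite{joffemetivier}, Lemma C of \cite{meleardfluctuation}) applied to the semimartingale decomposition (\ref{tcln}), with localization by $\zeta_N^{(n)}$ and (\ref{localisation}), compact containment in $W_0^{-4,1}$ via the Hilbert--Schmidt embedding of $W_0^{-3,2}$ together with Lemma \ref{lemmeproptcl}, and Aldous-type estimates for $\widetilde{V}^{(n)}$ and for the bracket/trace processes using the bound $\|\partial_a f\|_{C^{1,2}}\leq \|f\|_{C^{2,2}}$. The only cosmetic differences are that the paper fixes $N$ as in (\ref{Nachoisir}) and sends $n\to\infty$ rather than ``letting $N\to\infty$'', and that the finiteness of the trace (\ref{tracewidetildem}) comes from $\sum_{k\geq 1}\varphi_k^2(0)\leq C$, i.e.\ from $\delta_0\in W_0^{-4,1}$, rather than from the Hilbert--Schmidt embedding itself.
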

\begin{proof}
At first, we have to show that: $\forall t\in [0,T],$
\begin{equation}
\sup_{n}\mathbb{E}\left(|\eta^{s,(n)}_t|^2+|\eta^{i,(n)}_t|^2+ \|\eta^{r,(n)}_t\|^2_{W_0^{-3,2}}\right)<+\infty,\label{etapepoint1tcl}
\end{equation}
where $W_0^{-3,2}$ is a Hilbert space that is embedded in $W_0^{-4,1}$ by a Hilbert-Schmidt embedding. (\ref{etapepoint1tcl}) is a consequence of Lemma \ref{lemmeproptcl}.
\par Then, we establish an Aldous type condition for the finite-variation processes $\{\widetilde{V}^{(n)}\}_{n\in \N^*}$, the quadratic variation processes $\{\langle \widetilde{M}^{s,(n)}\rangle\}_{n\in \N^*}$ and $\{\langle \widetilde{M}^{i,(n)}\rangle\}_{n\in \N^*}$ and the \textit{trace} processes $\{\tlangle \widetilde{M}^{r,(n)}\trangle\}_{n\in \N^*}$ of $\{\widetilde{M}^{r,(n)}\}_{n\in \N^*}$, defined for a Hilbert basis $(\varphi_k)_{k\in \N^*}$ of $W_0^{4,1}$ by: $\forall n\in \N^*,\, \forall t\in [0,T]$,
\begin{eqnarray}
\tlangle \widetilde{M}^{r,(n)} \trangle_t  &= & \int_0^t (\sum_{k\geq 1}\varphi^2_k(0)) \left( \lambda_2 i^{(n)}_u+\lambda_3(i^{(n)}_u,\langle r^{(n)}_u,\psi\rangle)\right)du.\label{tracewidetildem}
\end{eqnarray}
Since $\sum_{k\geq 1}\varphi^2_k(0)\leq C$ (see \cite{meleardfluctuation}), $\tlangle \widetilde{M}^{r,(n)} \trangle_t$ is $\mathbb{P}$-almost surely defined.
\par Let $\delta>0$ and $(S_n,T_n)_{n\in \N^*}$ be a family of stopping times such that $S_n\leq T_n\leq S_n+\delta$. We have: $\forall n\in \N^*,\,\forall \varsigma>0,$
\begin{eqnarray}
  \mathbb{P}\left(\left\|\widetilde{V}^{r,(n)}_{T_n}-\widetilde{V}^{r,(n)}_{S_n}\right\|_{W_0^{-4,1}}\geq \varsigma\right)\leq &  \frac{1}{\varsigma^2}\mathbb{E}\left(\left\|\widetilde{V}^{r,(n)}_{T_n\wedge \zeta^n_N}-\widetilde{V}^{r,(n)}_{S_n\wedge \zeta^n_N}\right\|^2_{W_0^{-4,1}}\right)+\mathbb{P}\left(\zeta^{(n)}_N\leq T\right).\label{etapepoint3tcl}
\end{eqnarray}
To bound the first term, we bound $$
\mathbb{E}\left[\left\|f\mapsto \int_{s}^{t}\int_{\R_+}\partial_a f(a)\eta^{r,(n)}_u(da)du\right\|^2_{C^{-2,2}}\right]$$ 
by $$\mathbb{E}\left[(C \int_s^t \|\eta^{r,(n)}_u\|_{C^{-1,2}} du )^2\right]$$ by noting that $\forall f\in C^{2,2}$, $\partial_a f\in C^{1,2}$ with $\|\partial_a f\|_{C^{1,2}}\leq \|f\|_{C^{2,2}}$, and that $\forall s,t\in [0,T],\,\forall n\in \N^*,$
\begin{eqnarray*}
\left|\int_s^t\int_{\R_+}\partial_a f(a)\eta^{r,(n)}_u(da)du\right| &\leq &  \int_s^t \|\eta^{r,(n)}_u\|_{C^{-1,2}}\|\partial_a f\|_{C^{1,2}} du
\leq     C \int_s^t \|\eta^{r,(n)}_u\|_{C^{-1,2}}\|f\|_{C^{2,2}} du.
\end{eqnarray*}
Using Lemma \ref{lemmeproptcl}, we thus obtain:
\begin{equation}
\mathbb{P} \left(\left\|\widetilde{V}^{r,(n)}_{T_n\wedge \zeta^{(n)}_N}-\widetilde{V}^{r,(n)}_{S_n\wedge \zeta^n_N}\right\|^2_{W_0^{-4,1}}>\varsigma\right) <\frac{C( N)\delta^2}{\varsigma^2}+\mathbb{P}\left(\zeta^{(n)}_N\leq T\right).\label{etapepoint2tcl}
\end{equation}
Similar computations can be carried out for $\widetilde{V}^{s,(n)}$, $\widetilde{V}^{i,(n)}$, $\langle \widetilde{M}^{s,(n)}\rangle$, $\langle \widetilde{M}^{i,(n)}\rangle$ and $\tlangle \widetilde{M}^{r,(n)}\trangle$.  \\

\par With (\ref{etapepoint1tcl}) and (\ref{etapepoint2tcl}), the criterion in Lemma C of \cite{meleardfluctuation} is satisfied and Lemma \ref{proptensionannexe} is proved.
\end{proof}
By virtue of Prohorov's theorem, the sequence $\{\mathcal{L}(\eta^{(n)})\}_{n\in \N^*}$ is relatively compact in $\mathcal{P}(\mathbb{D}([0,T],\R^2\times W_0^{-4,1}))$ embedded with the weak convergence topology. The proof of Theorem \ref{theoremecentrallimiteenonce} is finished by showing that there is a unique adherence value.\\

\noindent \textbf{Identification of the adherence values.} Let $\eta\in \mathbb{D}([0,T],\R^2\times W_0^{-4,1})$ such that $\mathcal{L}(\eta)$ is an adherence value of this sequence. In order to simplify notation, denote again by $(\eta^{(n)})_{n\in \N^*}$ a subsequence that converges in law to $\eta$. Since the magnitude of the jumps of $\eta^{(n)}$ is of order $1/n$, the limiting process $\eta$ is continuous.
\par A first difficulty arises from the fact that Lemma \ref{lemmeproptcl} only deals with the fluctuations localized by the stopping times $\zeta_N^{(n)}$ which depends on $N$.
\par We start off with studying the tightness and the convergence in law of the martingales $(\widetilde{M}^{(n)})_{n\in \N^*}$ for which estimates that do not depend on $n$ nor on $N$ can be established (see \cite{chithese}, Lemma 4.4.5). We can prove, using the same tightness criterion as above, that $(\widetilde{M}^{(n)})_{n\in \N^*}$ is tight in $\mathbb{D}([0,T],\R^2\times W_0^{-4,1})$. Let $W=(W^s,W^i, W^r)$ be continuous martingales as in Theorem \ref{theoremecentrallimiteenonce}: $\forall \varepsilon>0,\,\forall \phi\in W_0^{4,1}$,
\begin{align*}
\lefteqn{  \mathbb{P}(\sup_{t\in [0,T]}\left|\langle \widetilde{M}^{r,(n)}(\phi)\rangle_t - \langle W^r(\phi)\rangle_t\right|>\varepsilon)}\\
\leq&
\frac{1}{\varepsilon}\mathbb{E}[\sup_{t\in [0,T\wedge \zeta^{(n)}_N]}\int_0^t\{\phi^2(0)\lambda_2  \frac{|\eta^{i,(n)}_s|}{\sqrt{n}}+  \phi^2(0)\bar{\lambda}_3 N^2\|\psi\|_{\infty}\frac{|\eta^{i,(n)}_s|}{\sqrt{n}}\\
& + L_3(N)N\frac{|\eta^{i,(n)}_s|+\|\eta^{r,(n)}_s\|_{C^{-1,2}}\|\psi\|_{C^{1,2}}}{\sqrt{n}}
\}ds]\\
& + \mathbb{P}\left(\zeta^{(n)}_N\leq T\right)\\
\leq&  \frac{C(N,T,\|\psi\|_{C^{1,2}})\|\phi\|_\infty}{\varepsilon \sqrt{n}}\mathbb{E}[\sup_{t\in [0,T\wedge \zeta^{(n)}_N]}|\eta^{i,(n)}_s|+\|\eta^{r,(n)}_s\|_{C^{-1,2}}]+\mathbb{P}\left(\zeta^{(n)}_N\leq T\right).
\end{align*}
By (\ref{localisation}) and by Lemma \ref{lemmeproptcl}, this gives that $(\langle\widetilde{M}^{r,(n)}(\phi)\rangle)_{n\in \N^*}$ converges in probability and uniformly in $t\in [0,T]$ to $\langle W^r(\phi)\rangle$, defined in Theorem \ref{theoremecentrallimiteenonce}. Since $\sup_{t\in [0,T]}|\Delta \widetilde{M}^{r,(n)}_t(\phi) |$ is bounded by $C/\sqrt{n}$ and hence uniformly integrable, we obtain by applying Theorem 3.12 page 432 of Jacod and Shiryaev \cite{jacod} that
$(\widetilde{M}^{r,(n)}(\phi))_{n\in \N^*}$ converges in law to the continuous square-integrable gaussian martingale $W^r(\phi)$ starting from 0 and with quadratic variation given by (\ref{crochetlimitetcl}). Similar computations can be done for the other terms of the bracket. Then, $(\widetilde{M}^{n})_{n\in \N^*}$ converges in law in $\mathbb{D}([0,T],\R^2\times W_0^{-4,1})$ to a process $W\in \Co([0,T],\R^2\times W_0^{-4,1})$ such as in Theorem \ref{theoremecentrallimiteenonce}.\\

\par To characterize now the limit value $\eta$, we introduce the following functional, for $\nu=(\nu^s,\nu^i,\nu^r)\in \mathbb{D}([0,T],\R^2\times W_0^{-4,1})$, $\phi\in W_0^{4,1}$ and $t\in [0,T]$:
{\small \begin{align} & \Psi(\nu,\phi,t)=
\left(
\begin{array}{c}
\nu_t^s\\
\nu^i_t\\
\nu^{r}_t(\phi)\end{array}\right)
-\left(
\begin{array}{c}
\eta_0^s\\
\eta^i_0\\
0\end{array}\right)\label{grandpsitcl}\\
 & -    \int_0^t\left(
\begin{array}{c}
-\left(\mu_0 +\partial_S\lambda_1(s_u,i_u)\right)\nu^s_u
-\partial_I\lambda_1(s_u,i_u) \nu^i_u\\
\partial_S\lambda_1(s_u,i_u)\nu^s_u-
\left(\partial_I\lambda_1(s_u,i_u)+\mu_1+\lambda_2
+\partial_I\lambda_3(i_u,\langle r_u,\psi\rangle)\right)\nu^i_u
-\partial_R \lambda_3(i_u,\langle r_u,\psi\rangle) \langle \nu^r_u,\psi\rangle
\\
\phi(t-u)(\lambda_2+\partial_I \lambda_3(i_u,\langle r_u,\psi\rangle))\nu_u^i+\phi(t-u)\partial_R\lambda_3(i_u,\langle r_u,\psi\rangle)\langle \nu_u^r,\psi\rangle\end{array}\right)
du\nonumber
\end{align}}
Notice that, in the definition of $\Psi$, the density dependence has been "frozen".
We can show that the process of $\Co([0,T],\R^2\times W_0^{-4,1})$ defined for every $ \phi\in W_0^{4,1}$ and $t\in [0,T]$ by $\left(\widetilde{M}_t^s,\,
\widetilde{M}^i_t,\,
\widetilde{M}^{r}_t(\phi)\right):=  \Psi(\eta,\phi,t)$ has the same law as the process $W$ defined in Theorem \ref{theoremecentrallimiteenonce}.
\par Indeed, since $(\eta^{(n)})_{n\in \N^*}$ converges in law to the continuous process $\eta$, we have $$\forall \phi\in W_0^{4,1},\,\lim_{n\rightarrow +\infty}\Psi(\eta^{(n)},\phi,.)=\Psi(\eta,\phi,.).$$ We shall now prove that $\Psi(\eta^{(n)},\phi,.)$ has the same limit in law as $\widetilde{M}^{(n)}(\phi)$.

\par From (\ref{deffluctuations}) and (\ref{grandpsitcl}): $\forall n\in \N^*,\,\forall \phi\in W_0^{4,1},\,\forall t\in [0,T],$
\begin{eqnarray}
\lefteqn{|\Psi(\eta^{(n)},\phi,t)-  (\widetilde{M}^{s,(n)}_t,\widetilde{M}^{i,(n)}_s,\widetilde{M}^{r,(n)}_t(\phi))|^2}\nonumber\\
&=&  \left(\int_0^t A(n,s)ds\right)^2+\left(\int_0^t \left[A(n,s) + B(n,\phi,s)\right]ds\right)^2\nonumber\\
&+&  \left(\int_0^t \phi(t-s) B(n,\phi,s)ds\right)^2,\label{etapeidentification1}
\end{eqnarray}
where:
\begin{eqnarray*}
 A(n,u)&= & \partial_S\lambda_1(s_u,i_u)\eta^{s,(n)}_u
+\partial_I\lambda_1(s_u,i_u) \eta^{i,(n)}_u-\sqrt{n}\{\lambda_1(s^{(n)}_u,i^{(n)}_u)-\lambda_1(s_u,i_u)\},\label{rajouta}\\
B(n,s)&= & \{\partial_I\lambda_3(i_u,\langle r_u,\psi\rangle)\eta^{i,(n)}_u
-\partial_R \lambda_3(i_u,\langle r_u,\psi\rangle) \langle \eta^{r,(n)}_u,\psi\rangle\}\nonumber\\
& -  & \sqrt{n}\{\lambda_3(i^{(n)}_u,\langle r^{(n)}_u,\psi\rangle)-\lambda_3(i_u,\langle r_u,\psi\rangle)\}.\label{rajoub}
\end{eqnarray*}
We obtain from
\begin{multline*}
\lambda_1(s^{(n)}_u,i^{(n)}_u)- \lambda_1(s_u,i_u)= \lambda_1(s_u+\frac{\eta^{s,(n)}_u}{\sqrt{n}},i_u+\frac{\eta^{i,(n)}_u}{\sqrt{n}})-\lambda_1(s_u,i_u)\\
=  \int_0^1 \{\partial_S \lambda_1(s_u+\alpha \frac{\eta^{s,(n)}_u}{\sqrt{n}},i_u+\alpha \frac{\eta^{i,(n)}_u}{\sqrt{n}})\frac{\eta^{s,(n)}_u}{\sqrt{n}}+\partial_I \lambda_1(s_u+\alpha \frac{\eta^{s,(n)}_u}{\sqrt{n}},i_u+\alpha \frac{\eta^{i,(n)}_u}{\sqrt{n}})\frac{\eta^{i,(n)}_u}{\sqrt{n}}\}d\alpha,
\end{multline*}
and from the Lipschitz properties of $\partial_S \lambda_1$ and $\partial_I\lambda_1$ that $| A(n,u)|
\leq
C(s_u,i_u)(|\eta^{s,(n)}_u|^2+|\eta^{i,(n)}_u|^2)/\sqrt{n}$ and:
\begin{eqnarray}
\mathbb{P}\left(\sup_{t\in [0,T]}\left|\int_0^t A(n,u) du\right|>\varepsilon\right)
&\leq & \frac{CN\mathbb{E}\left(\sup_{u\in [0,T\wedge \zeta^n_N]}|\eta^{s,(n)}_u|^2+|\eta^{i,(n)}_u|^2\right)}{\varepsilon\sqrt{n}}+\mathbb{P}\left(\zeta^n_N\leq T\right),\label{majorationans}
\end{eqnarray}
which tends to zero as $n\rightarrow \infty$, by virtue of Lemma \ref{lemmeproptcl} and of (\ref{localisation}). We deal with the term $B(n,s)$ with similar computations and obtain that $\int_0^t B(n,s) ds$ converges in probability to 0 uniformly in $t\in [0,T]$. As a consequence, $\Psi(\eta^{(n)},\phi,.)$ converges uniformly in $t\in [0,T]$ and in probability to the same limit as $(\widetilde{M}^{s,(n)}, \widetilde{M}^{i,(n)}, \widetilde{M}^{r,(n)}(\phi))$, which shows that the limiting values $\eta$ satisfy (\ref{evolutionlimitetcl}).\\

\par In order to complete Theorem \ref{theoremecentrallimiteenonce}'s proof, we establish that the strong uniqueness property holds for (\ref{evolutionlimitetcl}) (for given $W$ and $\eta_0$). For this, we work in the space $\Co([0,T],\R^2\times C^{-4,0})$. These results rely on the use of Gronwall's lemma and on the fact that when the density dependence is 'frozen' ($s^{(n)}$, $i^{(n)}$ or $r^{(n)}(da)$ have been replaced by their deterministic limits) the constants appearing in the estimates do not depend on the localization in $N$ any more.

\par Consequently, the adherence value of $(\mathcal{L}(\eta^{(n)}))_{n\in \N^*}$ is unique and the sequence $\eta^{(n)}$ converges in law in $\mathbb{D}([0,T],\R^2\times W^{-4,1})$ to the solution of SDE (\ref{evolutionlimitetcl}).
\subsection*{A4 Proof of Theorem \ref{consistency}}
Using representation (\ref{loglik}), the convergence (\ref{contrast}) directly results from Theorem \ref{LLN} combined with the assumed smoothness properties of $\lambda_1$, $\lambda_3$. Now, by virtue of the identifiability assumption, the limiting contrast $K(\theta,\theta^*)$ equals to $0$ in the sole case where $\theta=\theta^*$ and (\ref{consist}) then follows from the regularity assumption \textbf{R1} in a standard fashion (see \cite{Ibragimovbook}).

\subsection*{A5 Proof of Theorem \ref{asymptnorm}}

Observe first that the map $\theta\in \Theta \mapsto l_T^{(n)}(\theta)$ is twice differentiable, and denoting by $\theta^*\in \Theta$ the true value of the parameter, for all $\theta \in \Theta$ and $T>0$, the \textit{score} $\nabla_{\theta}l_T^{(n)}(\theta)$ equals to
\begin{eqnarray*}
&&\int_{t=0}^T\int_{u=0}^{\infty}\frac{\nabla_{\theta} \lambda_2(\theta)
}{\lambda_2(\theta)}\mathbf{1}_{\{0\leq u\leq n\lambda_2(\theta^*) i^{(n)}_{t-}\}}Q^I(dt,du)\\
&+&\int_{t=0}^T\int_{u=0}^{\infty}\frac{\nabla_{\theta} \lambda_3(i^{(n)}_{t-}, \langle r^{(n)}_{t-}, \psi \rangle, \theta)
}{\lambda_3(i^{(n)}_{t-}, \langle r^{(n)}_{t-}, \psi \rangle, \theta)}\mathbf{1}_{\{\lambda_2(\theta^*) ni^{(n)}_{t-}<u\leq \lambda_2 ni^{(n)}_{t-}+n\lambda_3 (i^{(n)}_{t-},\langle r^{(n)}_{t-},\psi\rangle,\theta^* )\}}Q^I(dt,du)\\
&-&n\int_{u=0}^T\{\nabla_{\theta}\lambda_2(\theta^*) i^{(n)}_{u}+ \nabla_{\theta}\lambda_3(i^{(n)}_{u}, \langle r^{(n)}_{u}, \psi \rangle, \theta)\}du\end{eqnarray*}
Let us define $I^{(n)}_{\theta^*}= -\mathcal{H}_{\theta}l_T^{(n)}(\theta)$. We have
{\small \begin{multline*}
I^{(n)}_{\theta^*}=-n\int_{u=0}^T\{\mathcal{H}_{\theta}\lambda_2(\theta) i^{(n)}_u\\
+\mathcal{H}_{\theta}\lambda_3 ( i^{(n)}_u,\langle r^{(n)}_u,\psi\rangle,\theta)\}du
+\int_{t=0}^T\int_{u=0}^{\infty}\left[\frac{\nabla_{\theta}\lambda_2 (\theta)\cdot ^t\nabla_\theta \lambda_2(\theta)}{\lambda_2 (\theta)}i^{(n)}_{t-}\mathbf{1}_{\{0\leq u\leq n\lambda_2 i^{(n)}_{t-}\}}\right.\\
+\frac{\nabla_\theta\lambda_3 (i^{(n)}_{t-}, \langle r^{(n)}_{t-},\psi\rangle,\theta)\cdot ^t\nabla_\theta\lambda_3 (i^{(n)}_{t-}, \langle r^{(n)}_{t-},\psi\rangle,\theta)}{\lambda_3 (i^{(n)}_{t-}, \langle r^{(n)}_{t-},\psi\rangle,\theta)}\\
\times \left.\mathbf{1}_{\{\lambda_2(\theta^*) ni^{(n)}_{t-}<u\leq \lambda_2(\theta^*) ni^{(n)}_{t-}+n\lambda_3 (i^{(n)}_{t-},\langle r^{(n)}_{t-},\psi\rangle,\theta^* )\}}\right]Q^I(dt,du).
\end{multline*}
}
We have the following result.
\begin{lemma} Under the assumptions of Theorem (\ref{asymptnorm}), we have:
\begin{itemize}
\item[(i)] $I^{(n)}_{\theta^*}\rightarrow \mathcal{I}_{\theta^*}$ in $\mathbb{P}_{\theta^*}$-probability, as $n\rightarrow \infty$,\\
\item[(ii)] for all $T>0$, the sequence of processes $(\{n^{-1/2}\nabla_{\theta}l_t^{(n)}(\theta)\}_{t\in [0,T]}, \,n\in \N^*)$, converges in law in $\mathbb{D}(\R_+,\R)$ to the continuous gaussian martingale process with 0 as initial value and quadratic variation given by:
\begin{eqnarray}
 \int_0^t\left\{\frac{\nabla_\theta \lambda_2(\theta^*)\cdot ^t\nabla_\theta \lambda_2(\theta^*)}{ \lambda_2(\theta^*)}i^*_u+ \frac{\nabla_\theta \lambda_3(i^*_u,\langle r^*_u,\psi\rangle, \theta^*)\cdot ^t\nabla_\theta \lambda_3(i^*_u, \langle r^*_u,\psi\rangle,\theta^*)}{ \lambda_3(i^*_u, \langle r^*_u,\psi\rangle, \theta^*)}\right\}ds
\end{eqnarray}
\end{itemize}
\end{lemma}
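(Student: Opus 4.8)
The plan is to exploit the fact that, at the true value $\theta^*$, the score $\nabla_\theta l^{(n)}_t(\theta^*)$ is exactly a compensated Poisson-integral martingale, so that both assertions reduce to identifying predictable quadratic variations and compensators and then passing to the deterministic limit via the Law of Large Numbers (Theorem \ref{LLN}). The starting point is the representation of $l^{(n)}_t$ through the Poisson point measure $Q^I$ of Definition \ref{definitionmicroinit}: differentiating under the integral sign writes the score as a stochastic integral against $Q^I$ supported on the disjoint indicator regions $\{0\leq u\leq n\lambda_2(\theta^*)i^{(n)}_{t-}\}$ and $\{\lambda_2(\theta^*)ni^{(n)}_{t-}<u\leq \lambda_2(\theta^*)ni^{(n)}_{t-}+n\lambda_3(i^{(n)}_{t-},\langle r^{(n)}_{t-},\psi\rangle,\theta^*)\}$, minus a Lebesgue drift term. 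Integrating the indicators in $u$ produces the $Q^I$-intensities $n\lambda_2(\theta^*)i^{(n)}_{t}$ and $n\lambda_3(i^{(n)}_t,\langle r^{(n)}_t,\psi\rangle,\theta^*)$, and one checks that at $\theta=\theta^*$ these compensators cancel the drift term exactly, so that $\zeta^{(n)}_t:=n^{-1/2}\nabla_\theta l^{(n)}_t(\theta^*)$ is a centred square-integrable martingale.

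For assertion (ii) I would first compute the $d\times d$ predictable quadratic variation of $\zeta^{(n)}$. Since the two indicator regions are disjoint in $u$, the spontaneous-detection and contact-tracing components never jump simultaneously; they are orthogonal, their cross bracket vanishes, and $\langle \zeta^{(n)}\rangle_t$ equals the sum of the two diagonal brackets, namely
\begin{equation*}
\langle \zeta^{(n)}\rangle_t=\int_0^t\left\{\frac{\nabla_\theta\lambda_2(\theta^*)\cdot{}^t\nabla_\theta\lambda_2(\theta^*)}{\lambda_2(\theta^*)}i^{(n)}_u+\frac{\nabla_\theta\lambda_3(i^{(n)}_u,\langle r^{(n)}_u,\psi\rangle,\theta^*)\cdot{}^t\nabla_\theta\lambda_3(i^{(n)}_u,\langle r^{(n)}_u,\psi\rangle,\theta^*)}{\lambda_3(i^{(n)}_u,\langle r^{(n)}_u,\psi\rangle,\theta^*)}\right\}du.
\end{equation*}
By Theorem \ref{LLN}, $(i^{(n)},\langle r^{(n)},\psi\rangle)\to(i^*,\langle r^*,\psi\rangle)$ in probability uniformly on $[0,T]$; combined with the $\Co^2$-regularity in $\theta$ from \textbf{R2} and the continuity of $\lambda_3$ in its first two arguments from \textbf{H1}, this yields convergence of $\langle \zeta^{(n)}\rangle_t$ in probability to the announced deterministic bracket. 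Finally each jump of $\zeta^{(n)}$ is of size $O(1/\sqrt n)$, so $\sup_{t\le T}|\Delta\zeta^{(n)}_t|\to0$ and the Lindeberg-type condition is trivially met; the martingale central limit theorem (Theorem 3.12 of \cite{jacod}, as invoked in Appendix \textbf{A3}) then gives convergence in law in $\mathbb{D}([0,T],\R^d)$ of $\zeta^{(n)}$ to the continuous Gaussian martingale with this deterministic quadratic variation, and taking $t=T$ produces the $\mathcal{N}(0,\mathcal{I}_{\theta^*})$ limit.

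For assertion (i) I would differentiate once more, writing the normalised observed information $n^{-1}I^{(n)}_{\theta^*}=-n^{-1}\mathcal{H}_\theta l^{(n)}_T(\theta^*)$ (the normalisation under which the limit $\mathcal{I}_{\theta^*}$ is finite) as the sum of a Lebesgue term $\int_0^T\{\mathcal{H}_\theta\lambda_2(\theta^*)i^{(n)}_u+\mathcal{H}_\theta\lambda_3(i^{(n)}_u,\langle r^{(n)}_u,\psi\rangle,\theta^*)\}du$ and an $n^{-1}$-weighted $Q^I$-integral whose integrand is $\mathcal{H}_\theta\lambda_k/\lambda_k-\nabla_\theta\lambda_k\cdot{}^t\nabla_\theta\lambda_k/\lambda_k^2$ on the respective region. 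Compensating the $Q^I$-integral, the intensity $\lambda_k$ multiplies $\mathcal{H}_\theta\lambda_k/\lambda_k$ back to $\mathcal{H}_\theta\lambda_k$, and these Hessian contributions cancel exactly against the Lebesgue term, leaving precisely $\int_0^T\{\nabla_\theta\lambda_2\cdot{}^t\nabla_\theta\lambda_2/\lambda_2\cdot i^{(n)}_u+\nabla_\theta\lambda_3\cdot{}^t\nabla_\theta\lambda_3/\lambda_3\}du$ plus a residual martingale with bracket $O(1/n)$, negligible by Doob's inequality. Passing to the limit with Theorem \ref{LLN} as above identifies the limit with $\mathcal{I}_{\theta^*}$, consistent with (\ref{infofisher}) at $\theta=\theta^*$, where the Hessian terms of (\ref{infofisher}) vanish.

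The main obstacle in both parts is not the algebra but the uniform control needed to push the Law of Large Numbers through integrands built from the ratios $\nabla_\theta\lambda_k/\lambda_k$ and $\nabla_\theta\lambda_k\cdot{}^t\nabla_\theta\lambda_k/\lambda_k$. Two points must be handled: the possible degeneracy of $\lambda_3(i^*_u,\langle r^*_u,\psi\rangle,\theta^*)$ near $u=0$, where $r^*_0=\mathbf{0}$ forces $\langle r^*_u,\psi\rangle$ to be small, so one must verify that the ratio stays integrable on $[0,T]$ (automatic for the multiplicative forms (\ref{forme1})--(\ref{forme3}), where the ratio reduces to $\langle r,\psi\rangle/\lambda_3$ or the like, but requiring an explicit lower-bound argument in general); and the uniform integrability of the $n$-indexed integrands, which I would secure by localising at the stopping times $\zeta^{(n)}_N$ of (\ref{defzetaNn}) together with the moment propagation from \textbf{$M_p$} with $p>2$ and the domination bounds of \textbf{H1}, exactly as in Appendix \textbf{A3}.
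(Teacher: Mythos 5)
Your proof is correct, but it supplies considerably more than what the paper itself records: the paper's entire proof of this lemma consists of the two sentences ``The first assertion follows from Theorem \ref{LLN}. The second one is a consequence of Theorem \ref{theoremecentrallimiteenonce}.'' For (i) you are filling in exactly the details behind the first sentence (decomposition of $-\mathcal{H}_{\theta}l_T^{(n)}(\theta^*)$ into a Lebesgue term plus a compensated $Q^I$-integral, cancellation of the Hessian contributions, an $O(1/n)$ residual martingale, then Theorem \ref{LLN}); you also correctly repair the statement by inserting the $1/n$ normalization, without which $I^{(n)}_{\theta^*}$, being of order $n$, cannot converge to $\mathcal{I}_{\theta^*}$. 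For (ii), however, your route genuinely differs from the one the paper points to: rather than deducing anything from the fluctuation CLT for $(s^{(n)},i^{(n)},r^{(n)})$, you apply the martingale CLT of Jacod--Shiryaev (Theorem 3.12 of \cite{jacod}) directly to the score process, with bracket convergence supplied by the law of large numbers alone. This is arguably the sounder route: the score is a stochastic integral against $Q^I$ that distinguishes spontaneous detections from contact-tracing detections, and this distinction is not recoverable from the trajectory of $(s^{(n)},i^{(n)},r^{(n)})$ (both event types decrease $i^{(n)}$ by $1/n$ and add an atom at $a=0$ to $r^{(n)}$), so assertion (ii) cannot literally be read off the statement of Theorem \ref{theoremecentrallimiteenonce}; what can be borrowed is its proof technique (convergence of brackets, jumps of order $1/\sqrt{n}$, then the martingale CLT), which is precisely what you reproduce, making your argument self-contained given Theorem \ref{LLN}. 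Finally, the two integrability caveats you raise --- the possible degeneracy of $\lambda_3(i^*_u,\langle r^*_u,\psi\rangle,\theta^*)$ near $u=0$, where $r^*_u$ is close to the null measure so that the ratios $\nabla_\theta\lambda_3/\lambda_3$ and $\nabla_\theta\lambda_3\cdot{}^t\nabla_\theta\lambda_3/\lambda_3$ may blow up, and the uniform integrability secured by localization at the stopping times $\zeta_N^{(n)}$ --- point at genuine gaps that the paper passes over in silence; they disappear for the multiplicative models (\ref{forme1})--(\ref{forme3}) actually used in Section 5, but a statement at the level of generality of the lemma would require an explicit hypothesis of the kind you indicate.
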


\begin{proof}
The first assertion follows from Theorem \ref{LLN}. The second one is a consequence of Theorem \ref{theoremecentrallimiteenonce}.
\end{proof}

The argument of the asymptotic normality results may be classically derived from the lemma above (see Chapter 4 in \cite{Linkov} for instance). Technical details are omitted.

\vspace{0cm}

{\footnotesize
\bibliographystyle{amsplain}
\bibliography{biblioanr}
}
\end{document}